\numberwithin{equation}{section}
\newtheorem{Theorem}{Theorem}[section]
\newtheorem{Corollary}[Theorem]{Corollary}
\newtheorem{Proposition}[Theorem]{Proposition}
{ \theoremstyle{definition}
\newtheorem{Remark}[Theorem]{Remark} }
\newcommand{\de}{\delta}
\newcommand{\ep}{\varepsilon}
\newcommand{\CC}{{\mathbb{C}}}
\newcommand{\HH}{{\mathbb{H}}}
\newcommand{\PP}{{\mathbb{P}}}
\newcommand{\ZZ}{{\mathbb{Z}}}
\newcommand{\NN}{{\mathbb{N}}}
\def\ZZ{{\mathbb Z}}
\def\t{\theta}
\def\s{\sigma}
\def\r{\rho}
\def\T{\Theta}
\def\tt#1#2{{\t\left[\begin{matrix}{#1}\\ {#2}\end{matrix}\right]}}
\begin{document}

\allowdisplaybreaks

\newcommand{\arXivNumber}{2004.05099}

\renewcommand{\PaperNumber}{057}

\FirstPageHeading

\ShortArticleName{On Frobenius' Theta Formula}

\ArticleName{On Frobenius' Theta Formula}

\Author{Alessio FIORENTINO and Riccardo SALVATI MANNI}
\AuthorNameForHeading{A.~Fiorentino and R.~Salvati Manni}

\Address{Sapienza Universit\`{a} di Roma, Italy}
\Email{\href{mailto:alessio.fiorentino@uniroma1.it}{alessio.fiorentino@uniroma1.it}, \href{mailto:salvati@mat.uniroma1.it}{salvati@mat.uniroma1.it}}

\ArticleDates{Received April 14, 2020, in final form June 11, 2020; Published online June 17, 2020}

\Abstract{Mumford's well-known characterization of the hyperelliptic locus of the mo\-duli space of ppavs in terms of vanishing and non-vanishing theta constants is based on Neumann's dynamical system. Poor's approach to the characterization uses the cross ratio. A~key tool in both methods is Frobenius' theta formula, which follows from Riemann's theta formula. In a 2004 paper Grushevsky gives a different characterization in terms of cubic equations in second order theta functions. In this note we first show the connection between the methods by proving that Grushevsky's cubic equations are strictly related to Frobenius' theta formula and we then give a new proof of Mumford's characterization via Gunning's multisecant formula.}

\Keywords{hyperelliptic curves; theta functions; Jacobians of hyperelliptic curves; Kummer variety}

\Classification{14H42; 14H45; 14K25; 14K12; 14H40}

\section{Introduction}

We denote by $\HH_g$ the {\it Siegel upper half-space}~-- the
space of complex symmetric $g\times g$ matrices with positive definite
imaginary part. An element $\tau\in\HH_g$ is called a {\it period
matrix} and defines the complex abelian variety $X_\tau:=\CC^g/\ZZ^g+\tau
\ZZ^g$. It is a well-known fact that the moduli space of principally polarized abelian varieties (ppavs for short) can be identified with the quotient of $\HH_g$ by an action of the symplectic group ${\rm Sp}(2g, \ZZ)$ which is a generalization of the standard action of ${\rm SL}(2,\ZZ)$ on the complex upper-half plane, namely,
\[
\gamma \cdot \tau := (a \tau + b) \cdot (c \tau + d)^{-1},\qquad \forall\, \gamma = \begin{pmatrix} a & b \\ c & d \end{pmatrix} \in {\rm Sp}(2g,\ZZ),
\]
where $a$, $b$, $c$, $d$ are $g \times g$ blocks. A ppav is called {\it irreducible} if it is not isomorphic to a product of two lower-dimensional ppavs.

For $\ep,\de\in \ZZ_2^g$ and $z\in \CC^g$ we define the {\it first order theta function with characteristic $[\ep,\de]$} to be
\[
\tt\ep\de(\tau,z):=\sum\limits_{m\in\ZZ^g} \exp \pi {\rm i} \left[\left(
m+\frac{\ep}{2}\right)^t\tau \left(m+\frac{\ep}{2}\right)+2\left(m+\frac{\ep}{2}\right)^t\left(z+
\frac{\de}{2}\right)\right].
\]

Characteristics can be defined for $\ep$, $\de$ in $\ZZ^g$, but the {\it reduction formula}
\begin{gather*}%\label{eq:Ad}
\tt{\ep+2\ep'}{\de+2\de'} (\tau , z) = (-1)^{\langle \ep ,\de'\rangle} \tt\ep\de(\tau,z) \end{gather*}
(where the symbol $\langle \cdot , \cdot \rangle$ stands for the standard inner product) shows that these functions are uniquely determined up to a sign by considering~$\ep$ and~$\de$ as vectors of zeros and ones. Henceforward we shall work with reduced characteristics with the agreement that a theta function associated with a sum of characteristics is meant to be non-reduced.

Characteristics are defined {\it even} or {\it odd} depending on whether respectively $\langle \ep , \de\rangle =0,\, =1$ ${\rm mod}\, 2$. A straightforward computation shows there are $ 2^{g-1}\big( 2^{g}+1\big)$ even characteristics and $2^{g-1}\big( 2^{g}-1\big)$ odd ones. It is easily seen that theta functions associated with even characteristics are even functions in the variable~$z$, whereas those associated with odd characteristics are odd functions.
Triplets of characteristics $[\ep,\de]$, $[\ep',\de']$, $[\ep'',\de'']$ are also called {\it azygetic} if $\langle \ep , \de\rangle+\langle\ep' , \de'\rangle+\langle\ep'' , \de''\rangle+\langle\ep + \ep' + \ep'' , \de + \de' + \de''\rangle=1$. More generally, a $k$-tuple of characteristics is called azygetic if its subtriplets are azygetic.

Theta functions satisfy an addition formula (cf.~\cite{Ig72} for a general formulation and~\cite{Gr} or~\cite{GSM} for the specific version we use here)
\[
\theta \begin{bmatrix} \ep \\ \de \end{bmatrix} (2\tau, 2z) \theta \begin{bmatrix} \ep + \ep' \\ \de \end{bmatrix} (2\tau, 2w)= \frac{1}{2^g} \sum_{\s \in \ZZ_2^{g}}(-1)^{\langle \ep , \s\rangle} \theta \begin{bmatrix} \ep '\\ \de +\s \end{bmatrix} (\tau, z+w) \theta \begin{bmatrix} \ep'\\ \s \end{bmatrix} (\tau,z-w).
\]

For a given $\tau \in H_g$ we denote by $X_{\tau}[2]$ the group of points of order two on $X_{\tau}$ and by $L_{\tau}$ a~symmetric line bundle on $X_\tau$ defining the principal polarization. Note that
\[
\tt\ep\de(\tau,z)=\exp \pi {\rm i} \left[\frac{\ep^t}{2}\tau\frac{\ep}{2} +\frac{\ep}{2}\left(z+\frac{\de}{2}\right) \right]\tt{0}{0}\left( \tau,z+ \tau\frac{\ep}{2} +\frac{\de}{2} \right)
\]
and the function $z \to
\theta\left[\begin{smallmatrix}\ep \\ \de \end{smallmatrix}\right] (\tau,z)$ defines the unique (up to scalar multiplication)
section of $t_x^*L_{\tau}$, i.e., the translate of the line bundle $L_{\tau}$ by the point of order two $x=(\tau \ep +\de)/2$. Because of the duplication map, the functions $z \rightarrow
\theta\left[\begin{smallmatrix}\ep \\ \de \end{smallmatrix}\right] (\tau,2z)$ are a basis of $H^0\big(X_\tau, L_{\tau}^{\otimes 4}\big)$.

For $\ep\in\ZZ_2^g$ we also
define the {\it second order theta function with characteristic $\ep$} to
be
\[
\T[\ep](\tau,z):=\tt{\ep}{0}(2\tau,2z).
\]
The functions $z \rightarrow \T[\ep](\tau,z)$ are a basis of $H^0\big(X_\tau, L_{\tau}^{\otimes 2}\big)$. Since these functions are even, i.e., $
\T[\ep](\tau,-z)=
\T[\ep](\tau,z)$, they induce a map
\[
{\rm Th}_2\colon \ X_\tau\to \PP^{2^g-1}
\]
 that factorizes along the Kummer variety $ K_{\tau}:=X_\tau/\pm 1$.

By evaluating first order theta functions at $z=0$ we get the so-called {\it theta constants} $\theta\left[\begin{smallmatrix}\ep \\ \de \end{smallmatrix}\right] (\tau,0)$; because of the parity of theta functions, theta constants associated with odd characteristics are clearly trivial. Analogously, second order theta functions evaluated at $z=0$ yield {\it second order theta constants} $\T[\ep](\tau,0)$. As functions of $\tau$ these are not well defined on the moduli space of ppavs but, as a consequence of how they transform under the action of ${\rm Sp}(2g, \ZZ)$, they induce maps on finite covers of the moduli space. More precisely, if we introduce the following family of subgroups of finite index in ${\rm Sp}(2g, \ZZ)$
\begin{gather*}
\Gamma_g[n]:= \operatorname{Ker}({\rm Sp}(2g,\ZZ) \to {\rm Sp}(2g,\ZZ_n)), \qquad n \in \NN,\\
\Gamma_g[n,2n]:=\left\lbrace\gamma = \begin{pmatrix} a & b \\ c & d \end{pmatrix} \in \Gamma_g[n] \,|\, {\rm diag}\big(a^tb\big)\equiv{\rm diag}
\big(c^td\big)\equiv0\ {\rm mod}\ 2n \right\rbrace,
\end{gather*}
first order theta constants are seen to transform as follows (cf.~\cite{Ig72} and~\cite{RF} for the general formula)
\[
\tt\ep\de(\gamma \cdot \tau,0) = \kappa(\gamma) \chi_{\ep,\de}(\gamma) \det{(c \tau + d)}^{\frac{1}{2}} \tt\ep\de(\tau,0) \qquad \forall\, \gamma \in \Gamma_g[2],
\]
where $\kappa(\gamma)$ is an eighth root of the unity for any $\gamma$ and $ \chi_{\ep,\de}$ are characters of the group $\Gamma_g[2,4]/\Gamma_g[4,8]$; in particular, an action of $\Gamma_g[2,4]/\Gamma_g[4,8]$ is naturally induced on theta constants and can be described in terms of these characters $ \chi_{\ep,\de}$, which span the character group of $\Gamma_g[2,4]/\pm\Gamma_g[4,8]$ (cf.~\cite{SM}). Thanks to the transformation formula, first order theta constants induce a map of $\HH_g/\pm\Gamma_g[4,8]$ into the projective space; this map is known to be an immersion, cf.~\cite{Ig72}. Second order theta constants also induce a map on a finite cover of the moduli space of ppavs
\[
\mathcal{T}\colon \ \HH_g /\Gamma_g[2,4] \to \PP^{2^g-1},
\]
which is generically injective for any $g$, cf.~\cite{SM}. Once a basis $\omega_1, \dots , \omega_g$ for the cohomology of an algebraic curve of genus $g$ is chosen together with a symplectic basis of cycles $\de_1, \dots , \de_g, \de'_1, \dots, \de'_g$, the $g \times 2g$ matrix $\big(\int_{\de_j}\omega_i, \int_{\de'_j}\omega_i\big)$ defines a complex torus which is isomorphic to a ppav $X_{\tau}$; this complex torus is known as the {\it Jacobian variety} of the curve and the locus in the moduli space of ppavs defined by those ppavs that are isomorphic to Jacobians of curves is known as the {\it Jacobian locus}. Because of the existence of the map $\mathcal{T}$, one knows that the Jacobian locus and the {\it hyperelliptic locus} (i.e., the locus defined by those ppavs that are isomorphic to Jacobians of hyperelliptic curves) can be described in terms of equations involving theta constants by taking the preimages of these loci under the covering map $\HH_g /\Gamma_g[2,4] \to \HH_g /{\rm Sp}(2g, \ZZ)$; the preimage of the hyperelliptic locus has several irreducible components in $\HH_g /\Gamma_g[2,4]$.

As for the hyperelliptic locus, we know from the result in~\cite{mu} that the irreducible components are defined in terms of vanishing and non-vanishing conditions for certain theta constants $\theta\left[\begin{smallmatrix}\ep \\ \de \end{smallmatrix}\right] (\tau,0)$. Mumford's result is based on Neumann's dynamical system. In~\cite{po} Poor used the cross ratio to prove that the vanishing conditions alone are sufficient to characterize the hyperellipitic locus, once the irreducibility of the principally polarized abelian variety is assumed. In both cases a fundamental tool is the so-called Frobenius theta formula, which is a consequence of Riemann's theta formula (cf.~\cite{FR} and~\cite{mu}). A~few years ago in \cite{Gr}, a characterization of the hyperelliptic abelian varieties was given in terms of cubic equations in the second order theta functions. These cubics were obtained by using the explicit coefficients for the addition formula from~\cite{BK2}.

The aim of this note is twofold: we first relate Grushevsky's approach to the others' by proving that his cubic equations are related to Frobenius' theta formula; we then give a new proof of Mumford's characterization by applying Gunning's multisecant formula (cf.~\cite{Gu}), which is a remarkable geometrical characterization of the locus of Jacobians in terms of intersections of g-dimensional linear subspaces with the Kummer variety of an abelian variety.

In this way we have an explicit relation between various approaches; in particular, we get an immediate link between the geometrical characterization and the explicit equations for the hyperelliptic locus.

Incidentally, we also prove that Grushevsky's cubics are induced by the quadratic relations related to the theta vanishing.

\section{Theta functions}

Throughout the rest of the paper we shall omit the subscript $\tau$ whenever it is clear that $\tau$ is fixed.
For the topics introduced here we shall follow Section 1 in \cite{vG} closely. Once we set $ M:= L^{\otimes 2}$, as $t_{x}^*M\cong M$ if and only if $x \in X[2]$, the following group is well defined
\[
G(M):=\big\{(x, \phi) \,|\, x\in X[2], \, \phi\colon t_{x}^*M \xrightarrow{\cong} M \big\}
\]
and an irreducible action of $G(M)$ on the space of global sections of $M$ is also defined by setting $(x, \phi) s := \phi (t_{x}^*s)$.

A theta structure is an isomorphism between $G(M)$
and the Heisenberg group, namely the set
\[
H:=\CC^{*}\times\ZZ_{2}^g\times {\rm Hom}(\ZZ_{2}, \CC^{*})^g,
\]
provided with the group law
\[
(t,x,x^*) \cdot (s,y,y^*) :=(tsy^*(x), x+y, x^*+y^*).
\]

Now, let $B_n:=\Gamma(X, M^n)$ for any $n \geq 1$.
The action of $\{ \pm 1 \}\subset {\rm End}( X)$ decomposes each of these spaces into the direct sum of two factors $B_n ^{+}$ and $ B_n ^-$. One has $B_1= B_1^+$ and $B_1$ is an irreducible representation of $H$.
Moreover, $B_1$ admits a basis $\{X_{\s}\}$ with $\s\in\ZZ_2^g$ such that
$(t, x, x^*)X_{\s}=tx^*(x+\s)X_{x+\s}$. The Heisenberg group naturally acts on the $n$-fold symmetric products $S^nB_1$ as well. To decompose~$S^nB_1$ we introduce the two following subgroups of $H$:
\[
K=\{(t, x, x^*)\colon t=1,\, x=0\} , \qquad K^*=\{(t, x, x^*)\colon t=1,\, x^*=0\},
\]
and we recall from \cite{vG} the following:

\begin{Proposition}\label{pr.vG1}\quad
\begin{enumerate}\itemsep=0pt
\item[$(i)$] A basis of eigenvectors for the action of H on $S^2B_1$ is given by the $2^{g-1}\big( 2^{g}+1\big)$ elements $Q[\ep, \ep']=\sum(-1)^{\langle \s,\ep'\rangle}X_{\s}X_{\s+\ep}$, where $\s, \ep, \ep'\in\ZZ_2^g$, the symbol $\langle \cdot,\cdot\rangle$ stands for the standard inner product and $\langle \ep,\ep'\rangle=0$. The action of $H$ on the elements of the basis is given by
\[
(t, x, x^*)Q[\ep, \ep']=t^2(-1)^{\langle x,\ep'\rangle}x^*(\ep)Q[\ep, \ep'].
\]
\item[$(ii)$] The space $S^3B_1$ is a direct sum of $\big(2^g+1\big)\big(2^{g-1}+1\big)/3$ copies of a~$2^g$-dimensional irreducible representation of~$H$.
\item[$(iii)$]Let $X(H)$ be the group of characters of $\ZZ_{2}^g\times {\rm Hom}(\ZZ_{2}, \CC^{*})^g$. Then
\[
S^4B_1= \bigoplus_{\chi\in X(H)} S^4_{\chi}(B_1).
\]
The dimension of the eigenspace associated with the trivial character is equal to $\big(2^g+1\big)\big(2^{g-1}+1\big)/3$, whereas the dimension of the other eigenspaces is $\big(2^{g-1}+1\big)\big(2^{g-2}+1\big)/3$.
\end{enumerate}
\end{Proposition}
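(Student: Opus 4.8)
The plan is to read off everything from the fact that $B_1$ is the unique irreducible representation of the Heisenberg group $H$ on which the centre $\CC^*$ acts by the tautological character $t\mapsto t$ (the Stone--von Neumann representation), so that on $S^nB_1\subset B_1^{\otimes n}$ the centre acts by $t\mapsto t^n$. Everything then turns on the parity of $n$ through the central element $(-1,0,0)$: for $n$ odd it acts by $-1$, while for $n$ even it acts trivially. In the even case the finite abelian group $V:=\ZZ_2^g\times{\rm Hom}(\ZZ_2,\CC^*)^g=H/\CC^*$ acts on $S^nB_1$ through \emph{commuting involutions}: indeed $(1,x,x^*)^2=(x^*(x),0,0)$ is central with $x^*(x)=\pm1$, hence trivial on $S^nB_1$, and the commutator $[(1,x,x^*),(1,y,y^*)]$ is likewise central. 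Consequently $S^nB_1$ splits into joint eigenspaces indexed by $X(H)=\hat V$. This dichotomy governs $(i)$ and $(iii)$ (even $n$) on one side and $(ii)$ (odd $n$) on the other.

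For $(i)$ I would compute the $H$-action on a monomial directly. Applying $(t,x,x^*)$ to $X_\s X_{\s+\ep}$ and using that $x^*$ is a homomorphism gives $x^*(x+\s)x^*(x+\s+\ep)=x^*(\ep)$, so that $(t,x,x^*)(X_\s X_{\s+\ep})=t^2x^*(\ep)X_{x+\s}X_{x+\s+\ep}$; summing against $(-1)^{\langle\s,\ep'\rangle}$ and reindexing $\s\mapsto\s+x$ pulls out the sign $(-1)^{\langle x,\ep'\rangle}$ and reproduces the stated eigenvalue $t^2(-1)^{\langle x,\ep'\rangle}x^*(\ep)$. The reindexing $\s\mapsto\s+\ep$ yields $Q[\ep,\ep']=(-1)^{\langle\ep,\ep'\rangle}Q[\ep,\ep']$, which is precisely why one restricts to $\langle\ep,\ep'\rangle=0$: otherwise $Q$ vanishes. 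Finally, distinct admissible pairs $(\ep,\ep')$ give distinct characters of $V$ (vary $x^*$ to separate $\ep$, vary $x$ to separate $\ep'$), so the (clearly nonzero) $Q[\ep,\ep']$ are eigenvectors for pairwise distinct characters, hence linearly independent; as their number is $2^g+(2^g-1)2^{g-1}=2^{g-1}(2^g+1)=\dim S^2B_1$, they form a basis.

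Part $(ii)$ is then immediate: for $n=3$ the element $(-1,0,0)$ acts by $-1$, so every irreducible constituent of $S^3B_1$ has this central behaviour and is therefore, by uniqueness of the Stone--von Neumann representation, a copy of the single $2^g$-dimensional irreducible; the multiplicity is forced by dimensions, $\binom{2^g+2}{3}/2^g=(2^g+1)(2^{g-1}+1)/3$. For $(iii)$ the decomposition into characters is already in hand, and it remains to compute each $\dim S^4_\chi(B_1)$ via the projection formula $\dim S^4_\chi=|V|^{-1}\sum_{g\in V}\overline{\chi(g)}\,{\rm tr}(g\mid S^4B_1)$. I would feed this with the power sums $p_k:={\rm tr}(g^k\mid B_1)$ through Newton's identity $h_4=\frac{1}{24}(p_1^4+6p_1^2p_2+3p_2^2+8p_1p_3+6p_4)$. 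The crucial simplification is that the character of $B_1$ is supported on the centre and that $g=(1,x,x^*)$ satisfies $g^4=1$ with $g^2=(x^*(x),0,0)$ central: this gives $p_1=p_3=2^g\,[g=1]$, $p_2=2^g x^*(x)$ and $p_4=2^g$, whence ${\rm tr}(g\mid S^4B_1)=2^{g}(2^g+2)/8$ for every $g\neq1$ (independently of the sign $x^*(x)$) and the full dimension for $g=1$. Summing with orthogonality of characters then produces $(2^g+1)(2^{g-1}+1)/3$ for the trivial character and $(2^{g-1}+1)(2^{g-2}+1)/3$ for each nontrivial one.

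The main obstacle I anticipate is organising the trace computation in $(iii)$ cleanly: one must recognise that ${\rm tr}(g\mid B_1)$ is concentrated at the identity and that squaring already lands in the centre, so that the four power sums collapse to expressions in the single sign $x^*(x)$, which then drops out of $h_4$ for $g\neq1$. Everything else---the eigenvalue bookkeeping in $(i)$ and the Stone--von Neumann multiplicity count in $(ii)$---is routine once the even/odd dichotomy for $(-1,0,0)$ has been isolated.
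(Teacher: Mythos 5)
Your proof is correct, but it cannot be compared with ``the paper's proof'' in the usual sense: the paper offers no proof of this proposition at all --- it is recalled verbatim from van Geemen \cite{vG}, and the surrounding text (the $K$-invariant basis of $S^3B_1$, the basis of $S^4_0(B_1)$, the maps $M(\chi)$ of Proposition \ref{pr.vG2}) indicates that the cited treatment rests on explicit eigenvectors and bases rather than on abstract character theory. Your route is genuinely different and self-contained: (a) for even $n$ you observe that the $\pm1$-valued cocycle and the central commutators of $H$ are killed on $S^nB_1$, so the eigenspace decompositions in (i) and (iii) follow; here it is worth writing the one-line verification $\rho(1,x,x^*)\rho(1,y,y^*)=y^*(x)^n\rho(1,x+y,x^*+y^*)=\rho(1,x+y,x^*+y^*)$ for $n$ even, which shows that $(x,x^*)\mapsto\rho(1,x,x^*)$ is an honest homomorphism of $V$ into ${\rm GL}(S^nB_1)$, so that your eigenvalues really are characters of $V$, i.e.\ elements of $X(H)$, and not of some covering group; (b) for (ii), Stone--von Neumann uniqueness applied to the central character $t\mapsto t^3$, whose induced commutator pairing $y^*(x)x^*(y)$ on $V$ is nondegenerate --- you should phrase the conclusion as ``the unique irreducible with this central character'' rather than ``the single $2^g$-dimensional irreducible'', since uniqueness is per central character and these constituents are not isomorphic to $B_1$ itself; and (c) a Molien-type trace computation for the $S^4_\chi$ dimensions, whose key collapse (character of $B_1$ supported at the identity, squares landing in the centre, hence ${\rm tr}\big(v\mid S^4B_1\big)=\tfrac{1}{24}\big(3p_2^2+6p_4\big)=2^g\big(2^g+2\big)/8$ for all $v\neq 1$) is correct, as are the final averages: they give $\big(2^g+1\big)\big(2^{g-1}+1\big)/3$ for the trivial character and $\big(2^{g-1}+1\big)\big(2^{g-2}+1\big)/3$ otherwise, and the linear-independence-plus-count argument in (i) ($2^g+(2^g-1)2^{g-1}=2^{g-1}(2^g+1)=\dim S^2B_1$) is also right. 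What your argument buys is brevity and independence from the combinatorics of monomials; what the explicit-basis approach of \cite{vG} buys is the concrete generators ($Q[\ep,\ep']$, the $K$-invariant cubics, the maps $M(\chi)$) that the rest of this paper actually manipulates, e.g.\ in Propositions \ref{trivial relations} and \ref{statement:FG}.
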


Moreover, a basis for $S^3B_1$ as an $H$-module is given by a basis of the $K$-invariant elements and a basis for $S^4_{0}(B_1)$ is also described in \cite{vG}.

Maps between these spaces can be defined. Let $S^3_{0}(B_1)$ be the subspace of $K$-invariant elements of $S^3(B_1)$.
 For $F\in S^3_{0}(B_1)$ and $ \s\in \ZZ_{2}^g$ let
\[
F_{\s}:=( 1, \s, 0)F.
\]
For $\chi=(y^*, y)\in X(H)$ (so that $\chi(x, x^*)=y^*(x)x^*(y)$) we define
\begin{gather*}
M(\chi)\colon \ S^3_{0}(B_1)\to S^4_{\chi}(B_1),\\
M(\chi)(F)= \sum_{\s\in \ZZ_{2}^g} y^*(\s)X_{\s+y}F_{\s}.
\end{gather*}

 \begin{Proposition} \label{pr.vG2}The maps $M(\chi)$ are surjective. In the case $\chi=0$ the map $M(0)$ is an isomorphism and the inverse is given by $ 4^{-1}\frac{\partial}{\partial X_0}$. Moreover
\[
\left(\frac{\partial}{\partial X_{\s}}\right)(M(0)F)=4F_{\s},\qquad \forall\, \s\in \ZZ_{2}^g.
\]
\end{Proposition}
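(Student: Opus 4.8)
The plan is to handle every $M(\chi)$ simultaneously by exhibiting an explicit right inverse built from a single partial derivative, and then to extract the $\chi=0$ statement from a dimension count. Throughout write $\partial_\mu$ for $\partial/\partial X_\mu$ and recall that $(1,\s,0)$ acts on $B_1=\Gamma(X,M)$, and hence on the symmetric algebra $S^\bullet B_1$, as the algebra automorphism $X_\mu\mapsto X_{\s+\mu}$. In particular $(1,\s,0)^2=\mathrm{id}$, and since both sides are derivations agreeing on the generators one has the conjugation relation $(1,\s,0)\,\partial_\mu\,(1,\s,0)=\partial_{\mu+\s}$.

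First I would check that $M(\chi)$ really maps into $S^4_\chi(B_1)$. Writing $\chi=(y^*,y)$, the group law gives $(1,x,0)F_\s=F_{x+\s}$, while the $K$-invariance of $F$ yields $(1,0,x^*)F_\s=x^*(\s)F_\s$. Feeding these into the definition of $M(\chi)(F)$ and reindexing shows that $(1,x,0)$ and $(1,0,x^*)$ scale $M(\chi)(F)$ by $y^*(x)$ and $x^*(y)$, i.e.\ by the character $\chi$, as required.

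The heart of the matter is surjectivity, and the key point is that $\tfrac14\partial_y$ is a right inverse of $M(\chi)$. Fix $G\in S^4_\chi(B_1)$; it is supported on degree-four monomials whose indices sum to $y$, so removing a factor $X_y$ leaves three indices summing to $0$ and therefore $\partial_y G\in S^3_0(B_1)$, making $M(\chi)(\partial_y G)$ meaningful. For each $\s$ I would use $(1,\s,0)^2=\mathrm{id}$ to write $(\partial_y G)_\s=(1,\s,0)\partial_y G=\big[(1,\s,0)\partial_y(1,\s,0)\big]\big[(1,\s,0)G\big]=y^*(\s)\,\partial_{\s+y}G$, invoking the conjugation relation and the eigenvalue $(1,\s,0)G=y^*(\s)G$. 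Substituting this and using $y^*(\s)^2=1$ makes the character weights disappear:
\[
M(\chi)(\partial_y G)=\sum_{\s\in\ZZ_2^g}y^*(\s)\,X_{\s+y}\,(\partial_y G)_\s=\sum_{\s\in\ZZ_2^g}X_{\s+y}\,\partial_{\s+y}G=\sum_{\r\in\ZZ_2^g}X_{\r}\,\partial_{\r}G=4\,G,
\]
the last step being Euler's identity for the homogeneous element $G$ of degree four. Hence $M(\chi)\circ\tfrac14\partial_y=\mathrm{id}_{S^4_\chi(B_1)}$ and every $M(\chi)$ is surjective.

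It remains to specialize to $\chi=0$, where $y=0$ and $y^*$ is trivial, so the right inverse is $\tfrac14\partial_0$. By Proposition~\ref{pr.vG1}(ii) together with the remark that the $K$-invariants index an $H$-module basis of $S^3B_1$, the dimension of $S^3_0(B_1)$ equals the number of irreducible summands, namely $(2^g+1)(2^{g-1}+1)/3$, which by Proposition~\ref{pr.vG1}(iii) is exactly $\dim S^4_0(B_1)$. A surjection between finite-dimensional spaces of equal dimension is an isomorphism, so $M(0)$ is an isomorphism and its right inverse $\tfrac14\partial_0$ is its two-sided inverse; in particular $\partial_0(M(0)F)=4F$ for every $F\in S^3_0(B_1)$. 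The formula for general $\s$ then drops out by equivariance: since $(1,\s,0)M(0)F=M(0)F$ and $\partial_\s=(1,\s,0)\partial_0(1,\s,0)$, we obtain $\partial_\s(M(0)F)=(1,\s,0)\partial_0(M(0)F)=(1,\s,0)(4F)=4F_\s$. The only delicate bookkeeping is in the surjectivity step, where one must track the Heisenberg cocycle carefully so that the squared weights $y^*(\s)^2$ cancel and the shift $\r=\s+y$ converts the twisted sum into the plain Euler operator; the rest is dimension counting and equivariance.
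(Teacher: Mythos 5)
Your proof is correct, but there is in fact no proof in the paper to compare it with: Proposition~\ref{pr.vG2}, like Proposition~\ref{pr.vG1}, is recalled from van Geemen~\cite{vG} and stated without argument. Judged on its own, your write-up is complete and consistent with the paper's conventions, and it fills exactly the gap the paper delegates to~\cite{vG}. The verification that $M(\chi)(F)$ lies in $S^4_\chi(B_1)$ is right, including the one delicate cocycle point: $(1,0,x^*)F_\sigma=x^*(\sigma)F_\sigma$ holds because $(1,0,x^*)(1,\sigma,0)=(1,\sigma,x^*)$ differs from $(1,\sigma,0)(1,0,x^*)$ by the central scalar $x^*(\sigma)$, which acts on $S^3B_1$ as its cube, i.e., as $x^*(\sigma)$ itself since all character values are $\pm 1$. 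The surjectivity mechanism is sound and is clearly what the displayed formula in the statement is hinting at: $G\in S^4_\chi(B_1)$ is supported on monomials whose indices sum to $y$, so $\partial_y G\in S^3_0(B_1)$; the conjugation relation $(1,\sigma,0)\partial_\mu(1,\sigma,0)=\partial_{\mu+\sigma}$ together with the eigenvalue $(1,\sigma,0)G=y^*(\sigma)G$ gives $(\partial_yG)_\sigma=y^*(\sigma)\partial_{\sigma+y}G$; the weights then square to $1$ and Euler's identity $\sum_\rho X_\rho\partial_\rho G=4G$ yields $M(\chi)\circ\frac{1}{4}\partial_y=\mathrm{id}$ for every $\chi$ at once, which is an efficient uniform treatment. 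The $\chi=0$ upgrade is also correct: by Proposition~\ref{pr.vG1}(ii),(iii) and the fact that each irreducible summand of $S^3B_1$ (a copy of $B_1$) carries a one-dimensional space of $K$-invariants, one gets $\dim S^3_0(B_1)=\big(2^g+1\big)\big(2^{g-1}+1\big)/3=\dim S^4_0(B_1)$, so your surjection between equidimensional spaces is an isomorphism, the right inverse $\frac{1}{4}\partial_0$ is two-sided, and the general identity $\partial_\sigma(M(0)F)=(1,\sigma,0)\partial_0(M(0)F)=4F_\sigma$ follows by equivariance because $M(0)F$ has trivial character. No gaps; this is a legitimate self-contained proof of the cited statement.
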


We will briefly discuss the case of the quartics; more details on the subject can be found in~\cite{vG}.

The decomposition of $S^4B_1$ is given in Proposition~\ref{pr.vG1}(iii); a similar decomposition holds for the space
\[
S^2\big(S^2B_1\big)=\bigoplus_{\chi} \big(S^2\big(S^2B_1\big)\big)_{\chi}.
\]

In this case the dimensions of the eigenspaces are respectively equal to $2^{g-1} \big(2^g+1\big)$ and $2^{g-2}\big(2^{g-1}+1\big)$. Obviously, we have a surjective map
\[
 \Phi_4\colon \ S^2\big(S^2B_1\big)\to S^4B_1.
\]
 According to \cite{fay} (cf.\ also~\cite{APS} or~\cite{FGS}), the relations are generated by a particular case of the so-called Riemann relations, namely those of the form
\begin{gather}\label{eq:RR}
\sum_{\langle\ep,\de\rangle =0} (-1)^{\langle \s, \de\rangle} v_{\ep,\de}Q[\ep, \de]Q[\ep+\s, \de+\r] .
\end{gather}

Here the vector $v=( v_{\ep,\de})$ varies in a suitable space of dimension $\big(2^{2g}-1\big)/3$. We say that these are biquadratic relations. By evaluating at the theta functions
$X_{\s}=\T[\s](\tau,z)$ and applying the addition formula, we get
\begin{gather}\label{eq:vT}
Q[\ep, \ep'](\dots,\T[\s](\tau,z),\dots):=Q[\ep, \ep'](\tau, z) =\tt \ep {\ep'} (\tau, 0)\tt \ep {\ep'} (\tau, 2z).
 \end{gather}
Hence, a basis for the quadrics containing the Kummer variety is given by those $Q[\ep, \ep']$ such that
$\theta\left[\begin{smallmatrix}\ep \\ \ep' \end{smallmatrix}\right] (\tau, 0)=0$.

To sum up, we can also state the following:

\begin{Proposition} \label{trivial relations}
All biquadratic Riemann relations induce trivial relations between second order theta functions.
\end{Proposition}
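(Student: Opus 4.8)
The plan is to show that, upon substituting the second order theta functions for the coordinates of $B_1$, every relation of the form \eqref{eq:RR} collapses to an identity valid for all $\tau\in\HH_g$, so that the relation it induces between the $\T[\s](\tau,z)$ carries no information on the period matrix. To this end I would first use the fact, recalled above, that for $v$ in the suitable space of dimension $\big(2^{2g}-1\big)/3$ the expression \eqref{eq:RR} is a generator of $\operatorname{Ker}\Phi_4$. By definition of $\Phi_4\colon S^2\big(S^2B_1\big)\to S^4B_1$, this means exactly that the quartic obtained by carrying out the products $Q[\ep,\de]\,Q[\ep+\s,\de+\r]$ in $S^4B_1$ is the zero element, i.e.\ an identically vanishing polynomial in the coordinates $X_\s$ of $B_1$.

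Next I would evaluate at $X_\s=\T[\s](\tau,z)$. Since evaluation at the theta functions is an algebra homomorphism, it commutes with $\Phi_4$: computing it on each quadratic factor through \eqref{eq:vT} and then multiplying returns the same value as computing it on the single quartic $\Phi_4$ of \eqref{eq:RR}. The latter being identically zero, its evaluation vanishes, and inserting \eqref{eq:vT} factor by factor yields
\begin{gather*}
\sum_{\langle\ep,\de\rangle=0}(-1)^{\langle\s,\de\rangle}v_{\ep,\de}\,\tt\ep\de(\tau,0)\,\tt{\ep+\s}{\de+\r}(\tau,0)\,\tt\ep\de(\tau,2z)\,\tt{\ep+\s}{\de+\r}(\tau,2z)=0
\end{gather*}
for every $\tau\in\HH_g$ and every $z$. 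Holding for all period matrices, this imposes no condition whatsoever on $\tau$, which is precisely the asserted triviality.

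I do not anticipate a real computational obstacle; the only delicate point is the commutation invoked above, namely that evaluating \eqref{eq:RR} factor by factor via \eqref{eq:vT} agrees with evaluating the single quartic $\Phi_4$ of \eqref{eq:RR}. This is immediate once one notes that \eqref{eq:vT} is nothing but the image of $Q[\ep,\ep']$ under evaluation at the theta functions while $\Phi_4$ is the multiplication map, so that the two operations commute. As a byproduct, the argument makes it transparent that a non-trivial relation -- one genuinely cutting out a proper locus such as the hyperelliptic one -- can only originate from the vanishing of individual quadrics $Q[\ep,\ep']$, equivalently from the theta vanishing $\tt\ep{\ep'}(\tau,0)=0$, and not from the biquadratic Riemann relations.
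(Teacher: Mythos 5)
Your proof is correct and takes essentially the same approach as the paper, which states this proposition precisely as a summary of the preceding discussion: the expressions \eqref{eq:RR} lie in $\operatorname{Ker}\Phi_4$, hence vanish identically as quartics in the $X_\s$, so their evaluation at $X_\s=\T[\s](\tau,z)$ via \eqref{eq:vT} yields identities valid for all $\tau\in\HH_g$ and all $z$ rather than conditions on the period matrix. Your observation that evaluation at the theta functions commutes with the multiplication map $\Phi_4$ merely makes explicit the step the paper leaves implicit.
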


 We know there also exist highly non-trivial relations like the equation defining the Kummer surface in genus 2 or the Coble quartic in genus~3. These are in the kernel of the map
\[
\Psi_4\colon \ S^4B_1\to B_4^+.
\]
\begin{Remark} More general Riemann relations are obtained by evaluating (\ref{eq:RR}) at the theta functions and then at the points $z+(\tau x+y)/4$, $x,y \in\ZZ^g$, which yields
\begin{gather}\label{eq:RR2}
\sum_{\langle\ep,\de\rangle =0}\! (-1)^{\langle\s+x, \de\rangle } v_{\ep,\de} \tt\ep\de(\tau,0)\tt{\ep+\s }{\de+\r}(\tau,0)\tt{\ep+x}{\de+y}(\tau,2z)\tt{\ep+\s+x}{\de+\r+y}(\tau,2z)=0.\!\!\!
\end{gather}
\end{Remark}

\section{Cubic equations}

As soon as the genus is greater than 2, there are also cubic equations defining the Kummer va\-riety. To focus on cubic relations, we first note that the spaces $S^3 B_1$ and $B_3^+$ decompose under the action of the Heisenberg group into $2^g$ irreducible representations that are all isomorphic to~$B_1$ (see~\cite{vG} or \cite{Bea}); we are interested in studying the kernel of the map $ \Psi_3\colon S^3B_1\to B_3^+$. A~simple way of constructing cubic relations is considering quadratic relations. Quadratic relations exist whenever we have the vanishing of some even theta constants i.e., $ \theta\left[\begin{smallmatrix}\ep \\ \de\end{smallmatrix}\right] (\tau,0)=0$ with $\langle \ep , \de\rangle =0$ ${\rm mod}\, 2$. If none of the theta constants vanish, we can prove there always exist cubic relations that are not a product of a quadratic and a linear form; this is done by means of a dimensional argument, as we have
\[
 \dim_{\CC} S^3 B_1=2^g\big(2^g+1\big)\big(2^{g-1}+1\big)/3> 2^{g-1}\big( 3^g+1\big)=\dim_{\CC} S^3 B_3^+.
\]
 In the genus 3 case there are exactly 8 cubics: in the non-hyperelliptic case (no vanishing theta constants) these cubics can be obtained as derivatives of the Coble quartic, while in the hyperelliptic case (one vanishing theta constant) they are obtained as a product of a quadric $Q[\ep, \ep'](\tau, z)$ and a linear form in the $\T[\s](\tau,z)$.

When the genus is higher, we need to determine a set of theta constants $\theta \left[\begin{smallmatrix} \ep \\ \ep ' \end{smallmatrix}\right] (\tau,0)$ that vanish at the point $\tau$ in the hyperelliptic locus; to do this, we start with a hyperelliptic point to which we can associate a special fundamental system of characteristics, namely an azygetic collection of $2g+2$ characteristics $m_1, \dots, m_g, m_{g+1}, \dots, m_{2g+2}$ such that the first $g$ are odd and the last $g+2$ are even. Once we denote by $\{e_k\}_{k=1, \dots, g}$ the elements of the natural basis in ${\ZZ^g_2}$, and set $e_{g+1}= 0$ and $s_k= e_1+\dots+e_k$, we can choose the following as special fundamental system
\begin{gather}
 m_1:=\begin{bmatrix} s_1 \\ e_1\end{bmatrix},\qquad \dots, \qquad m_k:=\begin{bmatrix} s_k \\ e_k\end{bmatrix},\qquad\dots,\qquad m_g:=\begin{bmatrix} s_g \\ e_g\end{bmatrix},
\nonumber\\
m_{g+1}:=\begin{bmatrix} 0 \\ e_{1}\end{bmatrix},\qquad \dots,\qquad m_{2g}:=\begin{bmatrix} s_{g-1} \\ e_{g}\end{bmatrix}, \qquad m_{2g+1}:=\begin{bmatrix} s_g \\ 0
\end{bmatrix},\qquad m_{\infty}:=\begin{bmatrix} 0 \\ 0 \end{bmatrix} .\label{eq:FS}
\end{gather}

To be consistent with Mumford's notation in \cite[p.~106]{mu}, we set
\begin{gather}\label{BU}
B:=\{1, \dots, 2g+ 1, \infty \} ,\qquad U:= \{ g+1, \dots , 2g+1 \},
\end{gather}
and for any subset $S\subset B$ we denote by $ CS$ the complementary set in $B$.
 To an even subset $T\subset B$ (modulo $S \sim CS$) we associate a characteristic according to the rule
 \[
T\to m_T= \sum_{j\in T} m_j .
\]
This actually defines a bijection.

By setting $T\circ S:= (T\setminus S)\cup (S\setminus T)$, it can be checked that the parity of $m_T$ is equal to $(-1)^{\frac{\#(T\circ U)-g-1}{2}}$.

We then know from the characterization of the hyperelliptic locus (cf.~\cite{Th} and~\cite{mu}) that the period matrix of a hyperelliptic curve always admits a conjugate $\tau \in \HH_g$ under the action of the group ${\rm Sp}(2g, \ZZ)$, for which the following vanishing and non-vanishing conditions for theta constants hold
\begin{gather}
\theta_{m_T} (\tau,0)=0 \qquad \text{if} \quad \#(T\circ U)\neq g+1, \label{vanishing}\\
\theta_{m_T} (\tau,0) \neq 0 \qquad \text{if} \quad \#(T\circ U) = g+1. \label{nonvanishing}
\end{gather}

{\it The case $g=4$}. To obtain all the even characteristics $m_T$ corresponding to vanishing theta constants we have to choose $T$ such that $\#T=4$ and
\[
T\subset\{ 1, 2, 3, 4, \infty \} \qquad {\rm or}\qquad
 T\subset\{ 5, 6, 7, 8, 9 \}.
\]
Thus, we get the following characteristics
\begin{gather}
\left[\begin{matrix}\ep_0\\ \de_0\end{matrix}\right]=\begin{bmatrix} 0 1 0 1 \\ 1 1 1 1 \end{bmatrix},\qquad \left[\begin{matrix}\ep_1\\ \de_1\end{matrix}\right]=\begin{bmatrix}1 1 0 1 \\ 0 1 1 1 \end{bmatrix}, \qquad \left[\begin{matrix}\ep_2 \\ \de_2 \end{matrix}\right]=\begin{bmatrix}1 0 0 1 \\ 1 0 1 1 \end{bmatrix}, \qquad \left[\begin{matrix}\ep_3 \\ \de_3 \end{matrix}\right]=\begin{bmatrix}1 0 1 1 \\ 1 1 0 1 \end{bmatrix}, \nonumber\\
 \left[\begin{matrix}\ep_4 \\ \de_4 \end{matrix}\right]=\begin{bmatrix} 1 0 1 0 \\ 1 1 1 0 \end{bmatrix},\qquad
\left[\begin{matrix}\ep_5 \\ \de_5 \end{matrix}\right]=\begin{bmatrix} 0 1 0 1 \\ 0 1 1 1 \end{bmatrix},\qquad \left[\begin{matrix}\ep_6 \\ \de_6 \end{matrix}\right]=\begin{bmatrix} 1 1 0 1 \\ 1 0 1 1 \end{bmatrix},\qquad \left[\begin{matrix}\ep_7 \\ \de_7 \end{matrix}\right]=\begin{bmatrix} 1 0 0 1 \\ 1 1 0 1 \end{bmatrix}, \nonumber\\
\left[\begin{matrix}\ep_8 \\ \de_8 \end{matrix}\right]=\begin{bmatrix} 1 0 1 1 \\ 1 1 1 0 \end{bmatrix}, \qquad \left[\begin{matrix}\ep_9 \\ \de_ 9 \end{matrix}\right]=\begin{bmatrix} 1 0 1 0 \\ 1 1 1 1 \end{bmatrix}.\label{10vanishing}
\end{gather}

Note that in \cite{Ig} the special fundamental system is given by a set of characteristics which is obtained from the above collection by switching the $\ep$ with the $\de$. Nevertheless, we prefer to use our form, since it is compatible with the notation in \cite{Gr}.

For this collection of vanishing theta constants the 10 induced quadratic relations $Q [ \epsilon , \epsilon ' ]=0$ produce 160 generators for $\operatorname{Ker} \Psi_3$ that are linearly dependent, as the dimension of the space is $144$ (computed with Mathematica).
Thus, there are (at least) $160 -144 = 16$ other cubic relations.

Another known example of a point $\tau \in \HH_4$ with a set of $10$ vanishing even theta constants is given by the so-called Varley--Debarre abelian variety, which is uniquely determined modulo the action of the integral symplectic group. For a suitable period matrix associated with this variety the set of characteristics corresponding to vanishing theta constants can be deduced from~\cite{vG2} or~\cite{Be} and chosen as follows
\begin{gather*}
n_1=\begin{bmatrix} 0 0 0 0 \\ 0 0 0 0 \end{bmatrix}, \qquad n_2=\begin{bmatrix} 1 0 1 0 \\ 0 0 0 0 \end{bmatrix}, \qquad n_3=\begin{bmatrix} 0 1 0 1 \\ 0 0 0 0 \end{bmatrix}, \qquad n_4=\begin{bmatrix} 1 1 1 1 \\ 0 0 0 0 \end{bmatrix}, \qquad n_5= \begin{bmatrix} 0 1 0 1 \\ 1 0 1 0 \end{bmatrix}, \\
n_6=\begin{bmatrix} 1 0 1 0 \\ 0 1 0 1 \end{bmatrix}, \qquad n_7=\begin{bmatrix} 0 0 0 0 \\ 1 0 1 0 \end{bmatrix}, \qquad n_8=\begin{bmatrix} 0 0 0 0 \\ 0 1 0 1 \end{bmatrix}, \qquad n_9=\begin{bmatrix} 0 0 0 0 \\ 1 1 1 1 \end{bmatrix}, \qquad n_{10}= \begin{bmatrix} 1 1 1 1 \\ 1 1 1 1 \end{bmatrix}.
\end{gather*}

These $10$ characteristics can be determined as follows: they belong to a coset of a $4$-dimen\-sio\-nal subspace of $\ZZ_2^8$ that contains all even elements and they are determined by the unique condition that the complementary set $\{n_{11},\dots, n_{16}\}$ satisfies
\[
n_{11}+\dots +n_{16}=0.
\]
This condition can be deduced from Proposition~4.5 in~\cite{vG2}, where the $2$-torsion points
 $p_1$, $p_2$, $p_3$, $p_4$, $p_1+p_3$, $p_2+p_4$ are given, or from corollary of Proposition~3 in~\cite{Be}. Here we can parametrize the set of quadratic forms associated with the symplectic form with characteristics $m$ of the type $m_i \oplus m_i$ with $m_i\in \ZZ_2^4$. Since the condition on the Arf invariant is related to the parity of the characteristics~$m_i$, we get the result.

For such a collection the dimension of the kernel of $\Psi_3$ turns out to be equal to $160$. This can be seen straightforwardly on the set of the ten even characteristics $[\ep, \alpha]$ such that $\ep = 0$ and $\alpha$ has exactly two or three entries equal to $1$; as this set also satisfies the above condition, it can actually be associated with the vanishing theta constants of a point corresponding to the Varley--Debarre variety up to conjugates; the $160$ cubics induced in this case are $(X[\r] Q[0, \alpha])(\tau,z)= X[\r] \cdot \sum\limits_{\s \in \ZZ_2^{4}}(-1)^{\langle \alpha, \s\rangle} X^2[\s]$ with $\alpha$ as before and $\r \in \ZZ_2^{4}$, which are easily seen to be linearly independent. Hence, we will not have new cubics if the map $ \Psi_3\colon S^3B_1\to B_3^+$ is surjective.

In general, we expect to find cubic equations in the hyperelliptic case that are not generated by the quadrics. Those given in \cite{Gr}, however, turn out to be dependent from the quadrics; we will discuss these cases in detail and deduce some results.

First of all, we recall the following statement from \cite{Gr}:

\begin{Theorem}\label{th.Gr} An irreducible period matrix $\tau\in \HH_g$ is the period matrix
of a hyperelliptic Jacobian with the basis of cycles chosen in such a way that the corresponding special fundamental system is the one given in~\eqref{eq:FS} if and only if the following cubic identities for second order theta functions are satisfied for all $\s\in \ZZ_2^g$ and for all $z\in \CC^g$:
 \begin{gather}\label{eq:Cu1}
 R_{\s}:= Q[0,0](\tau, z)\T[\s](\tau,z)- \sum_{k=0}^g (-1)^{\langle \s, e_{k+1}\rangle}
Q[s_k, e_{k+1}](\tau, z)\T[\s+s_k](\tau,z)=0,
\end{gather}
where we assume that not all the terms $ Q[0,0](\tau, z)$ and $Q[s_k, e_{k+1}](\tau, z)$ are identically zero in~$z$.
\end{Theorem}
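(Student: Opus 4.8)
The plan is to prove the equivalence by translating the cubic identities $R_\s=0$ into Frobenius' theta formula, and then to invoke the classical characterization of the hyperelliptic locus through the vanishing pattern of theta constants. The first move is to rewrite each quadric appearing in $R_\s$ by means of \eqref{eq:vT}, so that $Q[\ep,\ep'](\tau,z)=\tt{\ep}{\ep'}(\tau,0)\,\tt{\ep}{\ep'}(\tau,2z)$. I would then observe that the $g+2$ characteristics occurring as indices, namely $[0,0]$ together with $[s_k,e_{k+1}]$ for $k=0,\dots,g$, are exactly the even characteristics $m_\infty$ and $m_{g+1},\dots,m_{2g+1}$ of the special fundamental system \eqref{eq:FS} (those indexed by $U\cup\{\infty\}$). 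With this identification $R_\s=0$ reads as a linear relation, whose coefficients are the theta constants $\theta_{m_j}(\tau,0)$ of these even characteristics, among the products $\theta_{m_j}(\tau,2z)\,\T[\s+s_k](\tau,z)$; here the shift $s_k$ in the second-order index is precisely the $\ep$-part of the corresponding characteristic, while the sign $(-1)^{\langle\s,e_{k+1}\rangle}$ is governed by its $\de$-part, which is exactly the shape produced by the addition formula.

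For the forward implication I would take $\tau$ to be a hyperelliptic period matrix with fundamental system \eqref{eq:FS}. By the non-vanishing conditions \eqref{nonvanishing} none of the $g+2$ coefficient theta constants vanishes (one checks directly that $[0,0]$ and the $[s_k,e_{k+1}]$ never appear among the vanishing characteristics, as in the list \eqref{10vanishing} for $g=4$), so the hypothesis that not all the $Q$'s are identically zero is automatic. I would then recognize the relation $R_\s=0$ as Frobenius' theta formula for the even characteristics of the fundamental system: starting from Frobenius' identity among first-order theta functions, which holds for hyperelliptic Jacobians as a consequence of Riemann's theta formula together with Thomae's vanishing pattern \eqref{vanishing} (cf.~\cite{FR,mu}), a single application of the addition formula converts the first-order products into the second-order functions $\T[\s+s_k](\tau,z)$ and reproduces $R_\s=0$ termwise.

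For the converse I would assume $R_\s\equiv 0$ in $z$ for every $\s\in\ZZ_2^g$, the non-triviality hypothesis guaranteeing via \eqref{eq:vT} that the coefficient theta constants $\theta_{m_\infty}(\tau,0)$ and $\theta_{[s_k,e_{k+1}]}(\tau,0)$ do not all vanish. Reading the identities back through the addition formula re-expresses them as the Frobenius relations among first-order theta functions attached to the system \eqref{eq:FS}. By the converse direction of the characterization of the hyperelliptic locus -- in the sharp form due to Poor, in which, once the irreducibility of $X_\tau$ is assumed, these theta relations already force $\tau$ into the hyperelliptic locus (cf.~\cite{Th,mu,po}) -- I conclude that $\tau$ is a hyperelliptic period matrix whose special fundamental system is the one in \eqref{eq:FS}.

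I expect the main obstacle to be the combinatorial bookkeeping in the first two steps: verifying that the characteristic sums $[s_k,e_{k+1}]$, the azygety-induced signs of Frobenius' formula, and the sign changes coming from the reduction formula all line up so that $R_\s=0$ matches Frobenius' formula exactly, rather than up to an unaccounted sign or permutation of the even characteristics. In the converse the delicate point is to show that the identical vanishing of the cubics forces precisely Mumford's vanishing/non-vanishing dichotomy and not some degenerate configuration of theta nulls; this is where the irreducibility of the abelian variety is indispensable, and it is the same subtlety that separates Poor's sharpening from the original statement.
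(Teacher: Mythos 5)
Your forward implication (hyperelliptic $\Rightarrow$ cubics) is viable, but note two things. First, the paper never proves Theorem~\ref{th.Gr} directly: it is recalled from \cite{Gr}, where $R_\s=0$ is obtained by computing coefficients in the Buchstaber--Krichever addition theorem \cite{BK2}; what the paper proves instead is Proposition~\ref{statement:FG}, namely that Thomae's vanishing \eqref{vanishing} together with the single Frobenius identity \eqref{eq:RF} forces \eqref{eq:Cu1}, and it does so not by manipulating the addition formula but by observing that \eqref{eq:RF} arises from a biquadratic Riemann relation which is \emph{trivial as a polynomial identity} in the $X_\s$, so that one may differentiate (apply $M(0)^{-1}=4^{-1}\partial/\partial X_0$ of Proposition~\ref{pr.vG2}) and only then evaluate at theta functions. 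Second, your phrase ``reproduces $R_\s=0$ termwise'' hides a step: specializing Frobenius' formula \eqref{eq:FR} (say $z_1=z+w$, $z_2=z-w$, $z_3=-2z$, $z_4=0$, so that the odd terms drop) and applying the addition formula once yields the \emph{single} identity $\sum_{\s}\T[\s](\tau,w)R_\s(\tau,z)=0$ in two independent variables, not the $2^g$ separate identities; to split it you must invoke the linear independence of the functions $\T[\s](\tau,\cdot)$, which form a basis of $H^0\big(X_\tau,L^{\otimes 2}\big)$. With that supplied, your forward direction is correct.

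The converse is where the proposal has a genuine gap. Poor's theorem \cite{po} takes as hypothesis the vanishing conditions \eqref{vanishing} (plus irreducibility), not ``Frobenius-type relations''; so your chain cubics $\Rightarrow$ Frobenius relations $\Rightarrow$ Poor needs the bridge that the cubics force precisely the Thomae vanishing \eqref{vanishing}. You flag this as ``the delicate point'' but offer no argument, and the paper's own discussion shows it is a real obstruction rather than bookkeeping: already in genus $4$, assuming $\tt{0}{0}(\tau,0)\neq 0$, the biquadratic relations \eqref{eq:RF2a} (which, by Proposition~\ref{statement:GF'}, are exactly what the cubics yield) produce only the four vanishings \eqref{eq:v4}, while the remaining six squared theta constants satisfy a system of six equations whose coefficient matrix is singular -- its determinant vanishes identically as a consequence of Frobenius' formula itself, leaving rank $5$ -- so their vanishing cannot be concluded this way; the paper rescues genus $4$ only by invoking Noether/Igusa on azygetic theta nulls \cite{Igu}, and no such reduction is known in general genus. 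This is precisely why both \cite{Gr} and Section~4 of the paper prove the ``cubics $\Rightarrow$ hyperelliptic'' direction by a different mechanism: the identities \eqref{eq:Cu1} say that the $g+2$ Kummer images ${\rm Th}_2(A_i+z)$ of the half-periods $A_i$ attached to $[0,0]$ and the $[s_k,e_{k+1}]$ are linearly dependent for every $z$; the non-triviality assumption feeds the general position condition; Gunning's multisecant criterion \cite{Gu} then shows $\tau$ is a Jacobian with every $A_i$ on the Abel--Jacobi curve, and $2A_i-2A_j=0$ forces the curve to be hyperelliptic. Without either that bridge to \eqref{vanishing} or Gunning's criterion, your converse remains unproven.
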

\begin{Remark} The $2^g$ cubic polynomials appearing in (\ref{eq:Cu1}) span an irreducible representation of the Heisenberg group and are of the form
\[R_{\s}=(1,\s,0)R_0.
\]
\end{Remark}

We recall that the result $R_{\s}=0$ is obtained by computing the coefficients in the
formula given in \cite{BK2}.

Indeed, in \cite{Gr} the following relations appear
\begin{gather}
 \sum_{\ep\in\ZZ_2^g} \T[\ep](\tau,z)\T[\ep](\tau,z)\T[\s](\tau,z)\nonumber\\
\qquad {}= \sum_{k=0}^g \sum_{\ep\in\ZZ_2^g} (-1)^{\langle \ep+\s, e_{k+1}\rangle}\T[\ep](\tau,z)\T[\ep+ s_k](\tau,z)\T[\s+s_k](\tau,z). \label{eq:Cu}
\end{gather}

It is an immediate consequence of (\ref{eq:vT}) and the definition of $ Q[\ep,\de](\tau, z)$, that (\ref{eq:Cu1}) and~(\ref{eq:Cu}) are the same equations.

The condition on the non-identically zero terms in Theorem~\ref{th.Gr} turns into the assumption that not all $\theta\left[\begin{smallmatrix}\ep \\ \de\end{smallmatrix}\right](\tau,0) $ are null for
 \[
 \begin{bmatrix} \ep \\ \de\end{bmatrix} =\begin{bmatrix} 0 \\ 0 \end{bmatrix} ,\begin{bmatrix} 0 \\ e_{1}\end{bmatrix}, \dots\begin{bmatrix} s_k \\ e_{k+1} \end{bmatrix},\dots,\begin{bmatrix} s_g \\ 0
\end{bmatrix}.
\]
We observe that this assumption is required in Mumford's characterization of the hyperelliptic locus in \cite{mu}. Actually, the non-vanishing assumption (\ref{nonvanishing}) is stronger.

As proved in \cite{Gr}, these equations in genus 3 are
equivalent to the vanishing of one theta constant, yet when it comes to the genus~4 case they appear to be new; however, as already said, this is not the case, as we will explain. First of all, we need to recall from \cite[Theorem~7.1]{mu}, the so-called generalized Frobenius theta formula. For $U\subset B$ as in (\ref{BU}) we set $\ep_U(j):= \pm1$ according as $j\in U$ or $j\notin U$; then we have

\begin{Theorem} Let $\tau\in\HH_g$ satisfy the vanishing conditions in~\eqref{vanishing}. Then $\forall\, z_1, z_2, z_3, z_4 \in\CC^g$ such that $ \sum\limits_{i=1}^4 z_i=0$:
\begin{gather}\label{eq:FR} \quad \sum_{j\in B} \ep_U(j)\prod_{i=1}^4\theta[m_j](\tau, z_i)=0 . \end{gather}
\end{Theorem}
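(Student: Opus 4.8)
The plan is to derive \eqref{eq:FR} from Riemann's theta formula, of which the addition formula recalled above is the degree-two shadow. Introduce the Hadamard involution
\[
\calR=\frac12\begin{pmatrix}1&1&1&1\\1&1&-1&-1\\1&-1&1&-1\\1&-1&-1&1\end{pmatrix},\qquad (w_1,w_2,w_3,w_4)^t:=\calR\,(z_1,z_2,z_3,z_4)^t.
\]
Riemann's formula rewrites each product $\prod_{i=1}^4\theta[m_j](\tau,z_i)$ -- in which the \emph{same} characteristic $m_j$ occurs four times -- as $2^{-g}\sum_{\alpha}\epsilon_j(\alpha)\prod_{i=1}^4\theta\left[\begin{smallmatrix}\alpha\\0\end{smallmatrix}\right](\tau,w_i)$, where $\alpha$ runs over $\ZZ_2^g$ and $\epsilon_j(\alpha)\in\{\pm1\}$ is the Riemann sign attached to the pair $(m_j,\alpha)$. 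Taking all four characteristics equal is what makes this clean: the rows of $\calR$ sum to $2,0,0,0$ respectively, so $\calR$ sends $(m_j,m_j,m_j,m_j)$ to $(2m_j,0,0,0)\equiv(0,0,0,0)$, and the only characteristic surviving in the transformed product is the summation variable $\alpha$.

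The decisive simplification is the hypothesis $\sum_{i=1}^4 z_i=0$. The first row of $\calR$ gives $w_1=\tfrac12\sum_i z_i=0$, so in every transformed product the first factor is a theta \emph{constant} $\theta\left[\begin{smallmatrix}\alpha\\0\end{smallmatrix}\right](\tau,0)$. Summing over $j\in B$ with the signs $\ep_U(j)$ and interchanging the two summations, the left-hand side of \eqref{eq:FR} becomes
\[
\frac1{2^g}\sum_{\alpha\in\ZZ_2^g}\Bigl(\sum_{j\in B}\ep_U(j)\,\epsilon_j(\alpha)\Bigr)\theta\left[\begin{smallmatrix}\alpha\\0\end{smallmatrix}\right](\tau,0)\,\prod_{i=2}^4\theta\left[\begin{smallmatrix}\alpha\\0\end{smallmatrix}\right](\tau,w_i),
\]
so it suffices to prove that each summand vanishes.

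I would split according to the theta constant. Writing $\left[\begin{smallmatrix}\alpha\\0\end{smallmatrix}\right]=m_T$ under the bijection $T\mapsto m_T$, the vanishing conditions \eqref{vanishing} give $\theta\left[\begin{smallmatrix}\alpha\\0\end{smallmatrix}\right](\tau,0)=0$ for every $\alpha$ with $\#(T\circ U)\neq g+1$, which disposes of those terms outright. For the remaining $\alpha$ -- those with $\#(T\circ U)=g+1$, for which the theta constant is genuinely nonzero by \eqref{nonvanishing} -- one must instead show that the coefficient $\sum_{j\in B}\ep_U(j)\,\epsilon_j(\alpha)$ vanishes. This is where the azygetic structure of the fundamental system \eqref{eq:FS} enters: in Mumford's normalization $\epsilon_j(\alpha)$ is an explicit sign bilinear in $m_j$ and $\alpha$, so $\sum_{j\in B}\ep_U(j)\epsilon_j(\alpha)$ is a character sum over the $2g+2$ characteristics $m_1,\dots,m_{2g+2}$, and the partition of $B$ into $U$ and its complement $CU$ recorded by $\ep_U$ is exactly what forces its cancellation.

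The main obstacle is this last step. Everything before it is formal once Riemann's formula and the observation $w_1=0$ are in place; the work lies in pinning down the precise sign $\epsilon_j(\alpha)$ and verifying $\sum_{j\in B}\ep_U(j)\epsilon_j(\alpha)=0$ for each nonvanishing $\alpha$. This is a finite but delicate check that rests on the combinatorial identity relating $\#(T\circ U)$, the parity $(-1)^{(\#(T\circ U)-g-1)/2}$ of $m_T$, and the azygeticity condition $\langle\ep,\de\rangle+\langle\ep',\de'\rangle+\langle\ep'',\de''\rangle+\langle\ep+\ep'+\ep'',\de+\de'+\de''\rangle=1$ for the triples drawn from the system; reconciling the conventions of the additive Riemann formula with the sign bookkeeping of \eqref{eq:FS} is the part most likely to hide errors.
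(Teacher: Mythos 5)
You should first know that the paper does not prove this theorem at all: it is quoted from Mumford (Tata Lectures on Theta II, Theorem~7.1), with Poor's observation that Mumford's extra variables $a_i$ are redundant. So your attempt must stand on its own as a reconstruction of the classical Frobenius--Mumford argument, whose outline you do identify correctly: expand each product $\prod_i\theta[m_j](\tau,z_i)$ by Riemann's theta formula, use $w_1=\tfrac12\sum_i z_i=0$ to turn the first factor into a theta constant, kill the terms with $\#(T\circ U)\neq g+1$ by the vanishing hypothesis (together with parity, for the odd characteristics), and prove a sign cancellation for the rest. However, there is a genuine error at the first step: Riemann's theta formula is misquoted. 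When all four characteristics equal $m_j=[\ep_j;\de_j]$, the correct expansion is
\begin{gather*}
\prod_{i=1}^4\theta[m_j](\tau,z_i)=\frac1{2^g}\sum_{\alpha,\beta\in\ZZ_2^g}(-1)^{\langle\beta,\ep_j\rangle+\langle\alpha,\de_j\rangle}\prod_{i=1}^4\theta\left[\begin{smallmatrix}\alpha\\ \beta\end{smallmatrix}\right](\tau,w_i),
\end{gather*}
i.e., the sum runs over all $2^{2g}$ two-torsion characteristics $[\alpha;\beta]$, not over the $2^g$ characteristics $[\alpha;0]$ as you wrote. Your observation that $\calR$ sends $(m_j,m_j,m_j,m_j)$ to $(0,0,0,0)$ only identifies the coset being summed over; the summation variable in Riemann's formula shifts both the top and the bottom entry (in the lattice derivation, the $\beta$-sum arises from imposing the congruence $\sum_i q_i\equiv\ep_j\ {\rm mod}\ 2\ZZ^g$ by characters). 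The error is not cosmetic: the characteristics $m_T$ with $\#(T\circ U)=g+1$, whose theta constants do \emph{not} vanish, are spread over all of $\ZZ_2^{2g}$ (already in genus $1$ they are $[0;0]$, $[1;0]$ and $[0;1]$), so your restricted sum silently omits most of the terms whose coefficients actually have to be shown to cancel.

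Second, even with the formula corrected, the step you defer is not a peripheral verification but the entire content of the theorem: one must show that $\sum_{j\in B}\ep_U(j)(-1)^{\langle\beta,\ep_j\rangle+\langle\alpha,\de_j\rangle}=0$ for every $[\alpha;\beta]=m_T$ with $\#(T\circ U)=g+1$. You call this ``a finite but delicate check'', but it is not finite: the claim is for all $g$, and it requires the combinatorial lemma relating the Weil-pairing sign $(-1)^{\langle m_j,m_T\rangle_W}$ to the partition of $B$ determined by $T\circ U$ (a useful identity here is $(-1)^{\langle m,n\rangle_W}=e(m)e(n)e(m+n)$, where $e$ denotes the parity and $m_j+m_T=m_{\{j\}\circ T}$, which converts the pairing into parities computable from the rule $(-1)^{(\#(\cdot\,\circ U)-g-1)/2}$). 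In genus $1$ the check $-(-1)^{\alpha+\beta}+(-1)^{\alpha}+(-1)^{\beta}-1=0$ for the three even characteristics recovers Jacobi's quartic identity, but establishing the cancellation in general is precisely where Frobenius' and Mumford's work lies. As it stands, your proposal is a correct frame for the classical proof with its load-bearing step missing and its input formula wrong.
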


In \cite{mu} extra variables $a_i$ actually appear, but, as shown in \cite{po}, they are redundant.

We remark that if we apply the addition formula for theta functions to Grushevsky's cubic equations, we obtain similar equations that are also consequence of the generalized Frobenius theta formula; in fact, we have the following:

\begin{Proposition}\label{statement:GF'} If $\tau\in \HH_g$ satisfies equations \eqref{eq:Cu1},
 then, for all $z\in \CC^g$:
\begin{gather}\label{eq:RF2a}
 Q[0, 0] (\tau, z)Q[\ep_1, \de_1 ] (\tau, z)=\sum_{k=0}^g Q[s_k, e_{k+1}] (\tau, z)Q[s_k+\ep_1, e_{k+1}+\de_1] (\tau, z) .
\end{gather}
\end{Proposition}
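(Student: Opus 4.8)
The plan is to obtain the biquadratic identity \eqref{eq:RF2a} as a direct algebraic consequence of the cubic relations \eqref{eq:Cu1}, by multiplying $R_\s$ by a suitable linear form in the second order theta functions and summing over the full Heisenberg orbit $\s\in\ZZ_2^g$. Concretely, since $R_\s=0$ for every $\s$, I would consider the vanishing combination
\[
\sum_{\s\in\ZZ_2^g}(-1)^{\langle\s,\de_1\rangle}\,\T[\s+\ep_1](\tau,z)\,R_\s=0,
\]
whose multiplier is chosen precisely so that the first group of terms collapses to the desired quadric, and then show that, once the resulting products of second order theta functions are reassembled into quadrics, the two halves reproduce the two sides of \eqref{eq:RF2a}.

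First I would treat the $Q[0,0]$-part. Summing $(-1)^{\langle\s,\de_1\rangle}\T[\s]\T[\s+\ep_1]$ over $\s$ reproduces $Q[\ep_1,\de_1](\tau,z)$ straight from the defining formula of Proposition~\ref{pr.vG1}(i) (equivalently from the addition formula through \eqref{eq:vT}), yielding the left-hand factor $Q[0,0]\,Q[\ep_1,\de_1]$. For the $k$-th summand of $R_\s$ I would absorb the sign into $(-1)^{\langle\s,\de_1+e_{k+1}\rangle}$ and then reindex the inner sum by $\s\mapsto\s+\ep_1$; the key observation is that $\sum_\s(-1)^{\langle\s,\de_1+e_{k+1}\rangle}\T[\s+\ep_1]\T[\s+s_k]$ again collapses to a single quadric $Q[s_k+\ep_1,e_{k+1}+\de_1](\tau,z)$, so that the right-hand side of \eqref{eq:RF2a} appears term by term.

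The step I expect to be the main obstacle is the sign bookkeeping in this reindexing. The substitution $\s\mapsto\s+\ep_1$ introduces a phase $(-1)^{\langle\ep_1,\de_1+e_{k+1}\rangle}$, which simplifies to $(-1)^{\langle\ep_1,e_{k+1}\rangle}$ after invoking $\langle\ep_1,\de_1\rangle=0$. To recover \eqref{eq:RF2a} exactly as stated I must argue that these phases are harmless: a summand contributes only when the combined characteristic $[s_k+\ep_1,e_{k+1}+\de_1]$ is even, because $Q$ is antisymmetric under $\s\mapsto\s+\ep$ whenever $\langle\ep,\ep'\rangle=1$ and hence vanishes on odd characteristics; and on the surviving summands the orthogonality relations built into the special fundamental system~\eqref{eq:FS}, in particular $\langle s_k,e_{k+1}\rangle=0$ together with $\langle\ep_1,\de_1\rangle=0$, are what I would use to pin down the residual sign. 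Carefully matching these parity conditions against the phase $(-1)^{\langle\ep_1,e_{k+1}\rangle}$ for each surviving $[s_k+\ep_1,e_{k+1}+\de_1]$ is the heart of the argument; everything else is formal.

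Finally, I would cross-check the resulting identity against the generalized Frobenius theta formula~\eqref{eq:FR}: specializing the four arguments $z_i$ (subject to $\sum_i z_i=0$) so that the products of first order theta constants at $0$ and first order theta functions at $2z$ assemble, via \eqref{eq:vT}, into the quadrics $Q$ produces a relation of exactly the shape of \eqref{eq:RF2a}. This both confirms the computation and makes explicit the claim that Grushevsky's cubics are a consequence of Frobenius' formula.
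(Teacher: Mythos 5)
Your strategy coincides with the paper's own proof: both form the combination $\sum_{\s\in\ZZ_2^g}(-1)^{\langle\s,\de_1\rangle}\T[\s+\ep_1](\tau,z)R_\s$ (in the language of the Remark following the proposition, this is $M(\chi)R_0$ evaluated at the second order theta functions), collapse the $Q[0,0]$-part to $Q[0,0]Q[\ep_1,\de_1]$, and try to recognize each $k$-th inner sum as $Q[s_k+\ep_1,e_{k+1}+\de_1]$. You were also right that the reindexing phase is the crux: the paper's proof simply equates $\sum_\s(-1)^{\langle\s,\de_1+e_{k+1}\rangle}\T[\s+s_k]\T[\s+\ep_1]$ with $\sum_\s(-1)^{\langle\s,e_{k+1}+\de_1\rangle}\T[\s+s_k+\ep_1]\T[\s]$ without comment, and these two sums differ by exactly the phase you isolate.

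The gap is in your proposed resolution of that phase: it is not harmless, and the plan to show it is $+1$ on all surviving terms cannot succeed. Evenness of $[s_k+\ep_1,e_{k+1}+\de_1]$ does not kill the sign; using $\langle s_k,e_{k+1}\rangle=0$ and $\langle\ep_1,\de_1\rangle=0$ it only forces $\langle s_k,\de_1\rangle=\langle\ep_1,e_{k+1}\rangle$, so on surviving terms the residual sign equals the common value $(-1)^{\langle s_k,\de_1\rangle}=(-1)^{\langle\ep_1,e_{k+1}\rangle}$, which may well be $-1$. What your computation actually proves is the signed identity
\[
Q[0,0](\tau,z)\,Q[\ep_1,\de_1](\tau,z)=\sum_{k=0}^g(-1)^{\langle s_k,\de_1\rangle}\,Q[s_k,e_{k+1}](\tau,z)\,Q[s_k+\ep_1,e_{k+1}+\de_1](\tau,z).
\]
That the minus signs genuinely occur can be seen in the paper's own genus~$4$ application: for $[\ep_1,\de_1]=[\ep_0,\de_0]$ from \eqref{10vanishing} and $k=1$ one has $\langle s_1,\de_0\rangle=\langle\ep_0,e_2\rangle=1$, while $[s_1+\ep_0,e_2+\de_0]=[\ep_6,\de_6]$ is even, so $Q[\ep_6,\de_6]$ is a nonzero basis quadric entering with coefficient $-1$; a purely formal two-variable check ($\ep_1=e_2$, $\de_1=e_1$) gives $\sum_\s(-1)^{\langle\s,e_1\rangle}X_{\s+e_2}R_\s=Q[0,0]Q[e_2,e_1]-Q[0,e_1]Q[e_2,0]+Q[e_1,e_2]Q[e_1+e_2,e_1+e_2]$, again with a sign on the $k=1$ term. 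So carried out honestly, your argument establishes \eqref{eq:RF2a} only with these signs inserted; the sign-free form holds just in special cases, e.g., $\de_1=0$ (in particular Corollary~\ref{statement:GF}), or when the offending terms vanish identically, as happens for the characteristics used to obtain \eqref{eq:v4}. Be aware that the paper's printed proof drops the very same phase, so the defect lies in the statement as printed rather than in your method; but a correct write-up must either insert the signs or restrict to the cases where they are trivial. Finally, your closing cross-check against \eqref{eq:FR} is not available under the hypotheses of the proposition, which assumes only the cubics \eqref{eq:Cu1} and not the vanishing conditions \eqref{vanishing} needed for \eqref{eq:FR}; it can serve as heuristic motivation only.
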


\begin{proof} We only need to consider
\begin{gather*}
Q[0,0](\tau, z)\sum_{ \s} (-1)^{\langle \s,\de_1\rangle}\T[\s](\tau,z)\T[\s+\ep_1](\tau,z)\\
\qquad{}= \sum_{k=0}^g Q[s_k, e_{k+1}](\tau, z)\sum_{ \s} (-1)^{\langle \s,\de_1+e_{k+1}\rangle}
\T[\s+s_k](\tau,z)\T[\s+\ep_1](\tau,z).
\end{gather*}

Now, the l.h.s.\ yields $ Q[0, 0](\tau, z)Q[\ep_1, \de_1 ] (\tau, z)$. Moreover, we have
\[
Q[s_k+\ep_1, e_{k+1}+\de_1](\tau, z)= \sum_{\s \in\ZZ_2^g} (-1)^{\langle\s, e_{k+1} +\de_1\rangle} \T[\s+s_k+\ep_1](\tau,z)\T[\s](\tau,z).
\]

Hence, in the r.h.s.\ we obtain
\[
\sum_{k=0}^g Q[s_k, e_{k+1}] (\tau, z)Q[s_k+\ep_1, e_{k+1}+\de_1] (\tau, z).\tag*{\qed}
\]\renewcommand{\qed}{}
\end{proof}

\begin{Remark} As a referee correctly pointed out, in the above proposition we are computing $M(\chi)R_0$. \end{Remark}

As a particular yet fundamental case, we will consider $M(0)R_0$, i.e., the case obtained by setting $\ep_1=\de_1=0$; hence we get

\begin{Corollary}\label{statement:GF} If $\tau\in \HH_g$ satisfies equations \eqref{eq:Cu1}, for all $z\in \CC^g$ we have
\begin{gather}\label{eq:RF}
 Q[0, 0]^2(\tau, z)=\sum_{k=0}^g Q[s_k, e_{k+1}]^2(\tau, z).
\end{gather}
\end{Corollary}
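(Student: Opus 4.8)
The plan is to read off \eqref{eq:RF} as the single specialization $\ep_1=\de_1=0$ of the identity \eqref{eq:RF2a} just furnished by Proposition~\ref{statement:GF'}. Substituting the zero characteristic, the left-hand side $Q[0,0](\tau,z)\,Q[\ep_1,\de_1](\tau,z)$ becomes $Q[0,0]^2(\tau,z)$, while in every summand on the right the shifted index $[s_k+\ep_1,e_{k+1}+\de_1]$ collapses to $[s_k,e_{k+1}]$, so that $Q[s_k,e_{k+1}]\,Q[s_k+\ep_1,e_{k+1}+\de_1]$ turns into $Q[s_k,e_{k+1}]^2$. Comparing the two sides then gives \eqref{eq:RF} with no further work.

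For a self-contained argument that does not even invoke the proposition, I would instead form the combination $\sum_{\s\in\ZZ_2^g}\T[\s](\tau,z)\,R_\s$, which is precisely $M(0)R_0$ evaluated at the theta functions. Expanding $R_\s$ from \eqref{eq:Cu1} and using the description of the quadrics from Proposition~\ref{pr.vG1}(i), the first piece $Q[0,0]\sum_\s\T[\s]^2$ equals $Q[0,0]^2$, while for each fixed $k$ the inner sum $\sum_\s(-1)^{\langle\s,e_{k+1}\rangle}\T[\s]\T[\s+s_k]$ is exactly $Q[s_k,e_{k+1}]$, so the $k$-th term contributes $Q[s_k,e_{k+1}]^2$. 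Hence $\sum_\s\T[\s]R_\s=Q[0,0]^2-\sum_{k=0}^gQ[s_k,e_{k+1}]^2$; since each $R_\s$ vanishes by hypothesis, the right-hand side is zero, which is precisely \eqref{eq:RF}.

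The one point to check before either computation is purely combinatorial: that the quadrics in sight are genuine basis elements in the sense of Proposition~\ref{pr.vG1}(i), i.e.\ that their paired characteristics are orthogonal. This is automatic for $Q[0,0]$, and for $Q[s_k,e_{k+1}]$ it follows because $s_k=e_1+\dots+e_k$ has vanishing $(k+1)$-st coordinate (with the convention $e_{g+1}=0$), so $\langle s_k,e_{k+1}\rangle=0$; adding the zero characteristic leaves these inner products intact. There is thus no real obstacle here: the statement is a corollary exactly because it is the $\chi=0$ instance of the map $M(\chi)$ applied to $R_0$, and by Proposition~\ref{pr.vG2} the map $M(0)$ is in fact an isomorphism. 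The only thing one must not mishandle is the reduction of characteristics and the reassembly of the signs $(-1)^{\langle\s,e_{k+1}\rangle}$ into the correct $Q[s_k,e_{k+1}]$, but this bookkeeping was already carried out in the proof of Proposition~\ref{statement:GF'} and is inherited here for free.
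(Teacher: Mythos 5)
Your proposal is correct and follows essentially the same route as the paper, which obtains the corollary precisely as the specialization $\ep_1=\de_1=0$ of Proposition~\ref{statement:GF'}, i.e., as the computation of $M(0)R_0$. Your second, self-contained argument via $\sum_{\s}\T[\s](\tau,z)R_\s$ is just that same computation unwound, so there is no genuinely different method here, only a welcome explicit check that $\langle s_k,e_{k+1}\rangle=0$.
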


\begin{Remark} The important fact is that this formula is exactly the one given in \cite[Corollary~7.5, p.~113]{mu} when $S=\varnothing$, which is a particular case of the general formula obtained as a~consequence of the vanishing conditions~(\ref{vanishing}).
 \end{Remark}

Formula (\ref{eq:RF}) is therefore a special case of Frobenius' theta formula and it is fundamental in Mumford's investigation of Neumann's dynamical system (see \cite[Lemma~9.7]{mu}). It is worth recalling that this formula is obtained by plugging the vanishing conditions in a particular biquadratic Riemann relation of the type (\ref{eq:RR}) with $v_{\ep,\de}=\pm 1$ and $\s=\rho=0$; in fact, we can get it from (\ref{eq:FR}) by setting $z_1=z_2=0$, $z_3=z$, $z_4=-z$ and $ m_1=m_2=m_3=m_4.$

To obtain all of Frobenius' relations described by Mumford in the above cited Corollary, we can evaluate equation (\ref{eq:RF}) at the points $z+(\tau x+y)/4$ for $x,y \in\ZZ^g$, in the same way we did with Riemann's relations to obtain (\ref{eq:RR2}); this leads to the following
\begin{Corollary} If $\tau\in \HH_g$ is a period matrix
satisfying equation \eqref{eq:RF}, then
\begin{gather}\label{eq:RF2}
 \tt{0}{0}(\tau,0)^2\tt{x}{y }(\tau, 2z)^2 =\sum_{k=0}^g \tt{s_k}{e_{k+1}}(\tau,0)^2\tt{s_k+x}{e_{k+1}+y }(\tau, 2z)^2 .
\end{gather}
\end{Corollary}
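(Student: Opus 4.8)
The plan is to obtain \eqref{eq:RF2} from \eqref{eq:RF} exactly as \eqref{eq:RR2} was obtained from the biquadratic relation \eqref{eq:RR}, namely by a half-period translation of the argument. Since the hypothesis is that $\tau$ satisfies \eqref{eq:RF} for \emph{all} $z\in\CC^g$, I may evaluate that identity at $z+(\tau x+y)/4$ for any $x,y\in\ZZ^g$. First I would rewrite \eqref{eq:RF} in terms of first order theta functions by means of \eqref{eq:vT}, so that it reads
\[
\tt{0}{0}(\tau,0)^2\,\tt{0}{0}(\tau,2z)^2=\sum_{k=0}^g \tt{s_k}{e_{k+1}}(\tau,0)^2\,\tt{s_k}{e_{k+1}}(\tau,2z)^2 .
\]
In this form the theta constants $\tt{\ep}{\de}(\tau,0)^2$ are untouched by a shift of $z$, so the whole problem reduces to understanding how each squared factor $\tt{\ep}{\de}(\tau,2z)^2$ transforms under $2z\mapsto 2z+(\tau x+y)/2$.

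The computational input is the quasi-periodicity of theta with characteristics. Completing the square in the summation variable of the defining series gives
\[
\tt{\ep}{\de}\left(\tau,\,2z+\tfrac{\tau x+y}{2}\right)=\exp \pi {\rm i}\left[-\tfrac14 x^t\tau x-2 x^t z-\tfrac12 x^t\de-\tfrac12 x^t y\right]\tt{\ep+x}{\de+y}(\tau,2z),
\]
and squaring turns the prefactor into $\exp \pi {\rm i}\big[-\tfrac12 x^t\tau x-4 x^t z\big]\,(-1)^{\langle x,\de\rangle+\langle x,y\rangle}$. The crucial observation I would stress is that the exponential $\exp \pi {\rm i}\big[-\tfrac12 x^t\tau x-4 x^t z\big]$, together with the global sign $(-1)^{\langle x,y\rangle}$, is independent of the characteristic $[\ep,\de]$; it is therefore common to every term of the rewritten identity and cancels, leaving only the characteristic-dependent sign $(-1)^{\langle x,\de\rangle}$, which depends on the lower characteristic $\de$ alone.

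Putting the pieces together, the left-hand constant $\tt{0}{0}(\tau,0)^2$ becomes paired with $\tt{x}{y}(\tau,2z)^2$ (lower characteristic $\de=0$, hence no residual sign), while the $k$-th summand on the right becomes $\tt{s_k}{e_{k+1}}(\tau,0)^2\,\tt{s_k+x}{e_{k+1}+y}(\tau,2z)^2$ decorated with the sign $(-1)^{\langle x,e_{k+1}\rangle}$. Dividing through by the common factor then yields \eqref{eq:RF2}, the disposal of the surviving signs being the exact analogue of the replacement $(-1)^{\langle\s,\de\rangle}\to(-1)^{\langle\s+x,\de\rangle}$ that occurred in passing from \eqref{eq:RR} to \eqref{eq:RR2}.

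The main obstacle I expect is precisely this phase and sign bookkeeping. One must check with care that every $\tau$-dependent and $z$-dependent exponential really is characteristic-independent, since this is what allows the transcendental factors to be pulled out of the whole relation and discarded; and one must then verify that the residual signs $(-1)^{\langle x,\de\rangle}$ reorganise into the stated shape, which is where the specific lower characteristics $e_{k+1}$ of the fundamental system \eqref{eq:FS} enter. I would also remark that reducing the non-reduced characteristics $s_k+x$ and $e_{k+1}+y$ back to the $\{0,1\}$-range only introduces further signs, all of which are annihilated by the squaring and hence do not affect the identity.
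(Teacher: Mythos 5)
Your strategy coincides with the paper's: the corollary is stated there with no proof beyond the preceding sentence (evaluate \eqref{eq:RF} at $z+(\tau x+y)/4$, exactly as \eqref{eq:RR2} was obtained from \eqref{eq:RR}), and your quasi-periodicity computation executes this correctly---the shift formula, the squared prefactor, and the observation that $\exp \pi {\rm i}\big[{-\tfrac12 x^t\tau x}-4 x^t z\big](-1)^{\langle x,y\rangle}$ is characteristic-independent are all right.

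The flaw is the final step, where you ``dispose of'' the surviving signs. The residual sign $(-1)^{\langle x,\de\rangle}$ attached to each summand depends on its lower characteristic $\de=e_{k+1}$, so it is \emph{not} part of the common factor and cannot be divided out. What your computation actually proves is
\begin{gather*}
\tt{0}{0}(\tau,0)^2\,\tt{x}{y}(\tau,2z)^2=\sum_{k=0}^g(-1)^{\langle x,e_{k+1}\rangle}\,\tt{s_k}{e_{k+1}}(\tau,0)^2\,\tt{s_k+x}{e_{k+1}+y}(\tau,2z)^2,
\end{gather*}
and your own analogy says as much: in passing from \eqref{eq:RR} to \eqref{eq:RR2} the sign $(-1)^{\langle\s,\de\rangle}$ was \emph{updated} to $(-1)^{\langle\s+x,\de\rangle}$, not discarded; here $\s=0$, so the sign $(-1)^{\langle x,\de\rangle}$ must remain in the formula. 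Nor can any convention absorb it: as you yourself note, squaring kills only the reduction signs of non-reduced characteristics, and these translation signs are different. That the signed version is the correct one can be checked already in genus $1$ with $x=1$, $y=0$ (where \eqref{eq:RF} holds for every $\tau\in\HH_1$, being the identity $(a+b)^2=(a-b)^2+4ab$ in $a=\T[0]^2$, $b=\T[1]^2$): the translated identity is the classical
\begin{gather*}
\tt{0}{0}(\tau,0)^2\,\tt{1}{0}(\tau,2z)^2=\tt{1}{0}(\tau,0)^2\,\tt{0}{0}(\tau,2z)^2-\tt{0}{1}(\tau,0)^2\,\tt{1}{1}(\tau,2z)^2,
\end{gather*}
whereas the sign-free version differs from it by $2\,\tt{0}{1}(\tau,0)^2\tt{1}{1}(\tau,2z)^2\not\equiv 0$ and is therefore false. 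So \eqref{eq:RF2} as printed holds only up to these characteristic-dependent signs (a looseness in the paper's display, which the intended comparison with Mumford's Corollary~7.5 tolerates); your derivation is the right one, but a complete proof must either carry the signs in the final statement or explain precisely in what sense they may be suppressed---asserting their ``disposal'' is not a proof, and taken literally it proves a false identity.
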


The converse of Corollary~\ref{statement:GF} also holds:

 \begin{Proposition}\label{statement:FG} Let $\tau\in\HH_g$ satisfy the vanishing conditions in~\eqref{vanishing}; then equation \eqref{eq:RF} implies equations~\eqref{eq:Cu1}.
 \end{Proposition}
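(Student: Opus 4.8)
The plan is to rephrase everything through the maps $M(\chi)$ and then reduce to a finite Fourier inversion on $\ZZ_2^g$. By Proposition~\ref{statement:GF'} and the remark after it, evaluating $M(\chi)R_0$ at the second order theta functions is exactly the bilinear identity~\eqref{eq:RF2a}, while the hypothesis~\eqref{eq:RF} says precisely that this evaluation vanishes for $\chi=0$. Writing $\chi=(y^*,y)$ and recalling $R_\s=(R_0)_\s=(1,\s,0)R_0$, so that $M(\chi)R_0=\sum_{\s}y^*(\s)X_{\s+y}R_\s$, I would split the proof into two steps: first (A) promote the single relation~\eqref{eq:RF} to the whole family~\eqref{eq:RF2a}, i.e.\ show that $M(\chi)R_0$ evaluates to zero for every $\chi\in X(H)$; then (B) recover~\eqref{eq:Cu1} from that family.

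I would dispose of step (B) first, since it is clean and fixes the target. Suppose that for all $\chi=(y^*,y)$ and all $z$
\begin{gather*}
\sum_{\s\in\ZZ_2^g} y^*(\s)\,\T[\s+y](\tau,z)\,R_\s(\tau,z)=0 .
\end{gather*}
Fixing $z$ and $y$ and letting $y^*$ range over the characters of $\ZZ_2^g$, orthogonality (finite Fourier inversion on $\ZZ_2^g$) forces $\T[\s+y](\tau,z)R_\s(\tau,z)=0$ for every $\s$. Letting now $y$ vary, $\s+y$ runs through all of $\ZZ_2^g$, so $\T[\mu](\tau,z)R_\s(\tau,z)=0$ for all $\mu$ and all $\s$. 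Since the functions $\T[\mu](\tau,\cdot)$ have no common zero—they are the coordinates of the morphism ${\rm Th}_2\colon X_\tau\to\PP^{2^g-1}$—this yields $R_\s(\tau,z)=0$ for all $\s$ and all $z$, which is exactly~\eqref{eq:Cu1}.

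Step (A) is the substance of the argument, and it is where the vanishing conditions~\eqref{vanishing} enter. Exactly as~\eqref{eq:RF2} was obtained from~\eqref{eq:RF}, I would first translate~\eqref{eq:RF} by the quarter periods $(\tau x+y)/4$ to produce, for every characteristic $(x,y)$, the squared relations~\eqref{eq:RF2}. I would then pass from this squared family to the mixed relations~\eqref{eq:RF2a} by polarization: the discrepancy between a mixed relation and the two associated squared relations is a biquadratic Riemann relation of type~\eqref{eq:RR}, and by Proposition~\ref{trivial relations} every such relation is automatically satisfied by the second order theta functions. The vanishing conditions control the bookkeeping: by~\eqref{eq:vT} one has $Q[\ep,\de](\tau,z)=\tt\ep\de(\tau,0)\,\tt\ep\de(\tau,2z)$, which is identically zero in $z$ as soon as $\tt\ep\de(\tau,0)=0$, so every quadric attached to a vanishing characteristic drops out and only the terms displayed in~\eqref{eq:RF2a} remain.

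The hard part is precisely this matching in step (A): one must check that, after translating~\eqref{eq:RF} and subtracting the appropriate trivial biquadratic relation, the azygetic combinatorics of the fundamental system~\eqref{eq:FS} make exactly the quadrics of~\eqref{eq:RF2a} survive, with the correct signs, and nothing else. A cleaner alternative that sidesteps the polarization is available because under~\eqref{vanishing} the generalized Frobenius formula~\eqref{eq:FR} holds for all $z_1,\dots,z_4$ with $\sum_i z_i=0$: inserting suitable quadruples into~\eqref{eq:FR} and expanding the resulting products of first order theta functions by the addition formula yields~\eqref{eq:RF2a} for every $\chi$ directly. With~\eqref{eq:RF2a} established for all $\chi$, step (B) completes the proof.
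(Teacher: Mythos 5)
Your step (B) is sound: orthogonality of the characters of $\ZZ_2^g$ recovers $\T[\s+y](\tau,z)R_\s(\tau,z)=0$ from the family $M(\chi)R_0=0$, and since the $\T[\mu](\tau,\cdot)$ have no common zero (this is why ${\rm Th}_2$ is a morphism), one gets $R_\s=0$. But the proof lives entirely in step (A), which you yourself call ``the hard part'' and never close, and neither of your two routes to it is a proof. The polarization route is structurally flawed, not merely unfinished. By \eqref{eq:vT} a quadric $Q[\al,\be](\tau,z)=\tt\al\be(\tau,0)\tt\al\be(\tau,2z)$ couples a theta constant and a theta function with the \emph{same} characteristic, whereas the translated relations \eqref{eq:RF2} involve the terms $\tt{\ep}{\de}(\tau,0)^2\tt{\ep+x}{\de+y}(\tau,2z)^2$, which couple \emph{different} characteristics; so the relations \eqref{eq:RF2} are not evaluations of elements of $S^2\big(S^2B_1\big)$ at all, and ``subtracting a biquadratic Riemann relation of type \eqref{eq:RR}'' from them is not a meaningful operation. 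Moreover, every relation of type \eqref{eq:RR} carries a single fixed offset $(\s,\r)$: all its products are of the form $Q[\ep,\de]Q[\ep+\s,\de+\r]$. No combination of such relations can convert the offset-zero squares of \eqref{eq:RF} and \eqref{eq:RF2} into the offset-$(\ep_1,\de_1)$ mixed products of \eqref{eq:RF2a}. Your alternative route through the generalized Frobenius formula \eqref{eq:FR} is viable in principle~-- it amounts to re-deriving Mumford's Corollary~7.5 from his Theorem~7.1~-- but the quarter-period substitutions, the exponential translation factors and the signs $\ep_U(j)$ are precisely the content of that derivation, and you assert it rather than perform it.

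What you are missing is the mechanism that makes the paper's proof three lines long and bypasses the family \eqref{eq:RF2a} altogether: differentiation of a trivial polynomial identity. Equation \eqref{eq:RF} arises by evaluating at the theta functions a biquadratic Riemann relation $\sum v_{\ep,\de}\,Q[\ep,\de]^2$ with $v_{\ep,\de}=\pm1$, and by Proposition~\ref{trivial relations} this expression is \emph{identically zero as a polynomial in the abstract variables} $X_\mu$. A polynomial identity may be differentiated: since $\partial Q[\ep,\de]/\partial X_\s=2(-1)^{\langle\s,\de\rangle}X_{\s+\ep}$ (using $\langle\ep,\de\rangle=0$), one gets, for every $\s$,
\begin{gather*}
\sum_{\ep,\de} v_{\ep,\de}(-1)^{\langle\s,\de\rangle}Q[\ep,\de]\,X_{\s+\ep}=0
\end{gather*}
identically in the $X_\mu$; this is exactly the content of Proposition~\ref{pr.vG2}, which the paper invokes through the inverse of $M(0)$. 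Now evaluate at $X_\mu=\T[\mu](\tau,z)$: by \eqref{eq:vT} and the hypothesis \eqref{vanishing}, every quadric attached to a vanishing theta constant drops out, and the surviving terms are precisely $Q[0,0](\tau,z)\T[\s](\tau,z)-\sum_k(-1)^{\langle\s,e_{k+1}\rangle}Q[s_k,e_{k+1}](\tau,z)\T[\s+s_k](\tau,z)$, i.e., $R_\s(\tau,z)=0$, which is \eqref{eq:Cu1}. This ``trivial identity in, trivial identity out, then evaluate'' step replaces your whole (A)$+$(B) and is where the hypotheses of the proposition are actually used.
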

 \begin{proof}We know that equation (\ref{eq:RF}) is obtained by evaluating a suitable biquadratic Riemann relation of the form
 \[
\sum \pm Q[\ep, \de]^2 =0.
\]
But this is a trivial relation among the $X_{\s}$, hence, considering the inverse of the map $M(0)$ (cf.\ Proposition~\ref{pr.vG2}) we deduce that the derivative with respect to $X_0$ also gives a trivial relation in the $X_{\s}$ and the same holds for the derivative with respect to any of the $X_{\s}$. Evaluating at the $\T[\s](\tau, z)$ we get the non-trivial relations $R_{\s}=0$.
\end{proof}

Another interesting consequence can be derived. As we recalled, Riemann's relations of the form
\begin{gather*}%\label{eq:rr}
\sum_{\langle\ep,\de\rangle =0} v_{\ep,\de} Q[\ep, \de]^2 =0
\end{gather*}
induce trivial quartic relations among the $X_{\s}$ (see Proposition~\ref{trivial relations}). We can take the suitable equation inducing Frobenius' formula and write it as
\begin{gather*}%\label{eq:rr1}
\sum_{\langle\ep,\de\rangle =0/ \theta\left[\begin{smallmatrix}\ep \\ \de\end{smallmatrix}\right](\tau,0)\neq 0 } v_{\ep,\de}Q[\ep, \de]^2 +
 \sum_{\langle\ep,\de\rangle =0/ \theta\left[\begin{smallmatrix}\ep \\ \de\end{smallmatrix}\right](\tau,0)=0 } v_{\ep,\de}Q[\ep, \de]^2 =0,
\end{gather*}
hence
\begin{gather*}%\label{eq:rr2}
\sum_{\langle\ep,\de\rangle =0/ \theta\left[\begin{smallmatrix}\ep \\ \de\end{smallmatrix}\right](\tau,0)\neq 0 } v_{\ep,\de}Q[\ep, \de]^2 =
 -\sum_{\langle\ep,\de\rangle =0/ \theta\left[\begin{smallmatrix}\ep \\ \de\end{smallmatrix}\right]\ep\de(\tau,0)=0 } v_{\ep,\de}Q[\ep, \de]^2.
\end{gather*}

We can now take the derivatives of both sides with respect to any of the $X_{\s}$ and evaluate them at the $\T[\s](\tau,z)$; we get Grushevsky's relations on the left side and a linear combination of terms that are products of a quadratic and a linear form in the $\T[\s](\tau,z)$ on the right side. This means Grushevsky's relations are contained in the ideal generated by those quadrics
 $Q[\ep, \de] (\tau, z)$ such that $\theta\left[\begin{smallmatrix}\ep \\ \de\end{smallmatrix}\right](\tau,0)=0$, hence we have the following:

\begin{Corollary} Grushevsky's cubics lie in the space spanned by linear forms and the above vanishing quadratic forms.
 \end{Corollary}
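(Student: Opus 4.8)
The plan is to upgrade the observation made in the proof of Proposition~\ref{statement:FG} into a statement about ideal membership, by carrying out the derivative argument at the level of formal polynomials in the $X_{\s}$ and evaluating at theta functions only at the very end. First I would fix the biquadratic Riemann relation of type~\eqref{eq:RR} with $\s=\r=0$ and coefficients $v_{\ep,\de}=\pm1$ that induces Frobenius' formula~\eqref{eq:RF}; by Proposition~\ref{trivial relations} this is a trivial relation, so that the quartic $\sum_{\langle\ep,\de\rangle=0}v_{\ep,\de}\,Q[\ep,\de]^2$ vanishes identically as a polynomial in the $X_{\s}$. I would then break the sum into the block $A$ running over those $[\ep,\de]$ with $\theta\left[\begin{smallmatrix}\ep\\\de\end{smallmatrix}\right](\tau,0)\neq0$ and the block $B$ running over those with $\theta\left[\begin{smallmatrix}\ep\\\de\end{smallmatrix}\right](\tau,0)=0$, obtaining the formal identity $A=-B$.

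The decisive step is to differentiate $A=-B$ with respect to each $X_{\s}$. For the chosen relation the surviving non-vanishing quadrics are precisely $Q[0,0]$ and the $Q[s_k,e_{k+1}]$, with the signs prescribed by $\ep_U$, so that $A$ is literally the quartic $M(0)R_0=Q[0,0]^2-\sum_{k=0}^gQ[s_k,e_{k+1}]^2$ appearing in Corollary~\ref{statement:GF} (recall the Remark identifying these computations with $M(\chi)R_0$). Proposition~\ref{pr.vG2} then gives, for every $\s$,
\[
\frac{\partial}{\partial X_{\s}}A=\frac{\partial}{\partial X_{\s}}\big(M(0)R_0\big)=4\,(R_0)_{\s}=4R_{\s},
\]
where the last equality is the relation $R_{\s}=(1,\s,0)R_0$ of the Remark after Theorem~\ref{th.Gr}. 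On the other side each summand of $B$ differentiates as
\[
\frac{\partial}{\partial X_{\s}}\big(v_{\ep,\de}Q[\ep,\de]^2\big)=2\,v_{\ep,\de}\,Q[\ep,\de]\,\frac{\partial}{\partial X_{\s}}Q[\ep,\de],
\]
the product of the quadric $Q[\ep,\de]$ with a linear form in the $X_{\s}$; and since these are exactly the indices with $\theta\left[\begin{smallmatrix}\ep\\\de\end{smallmatrix}\right](\tau,0)=0$, relation~\eqref{eq:vT} identifies each such $Q[\ep,\de]$ with one of the vanishing quadrics containing the Kummer variety.

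Combining the two computations produces, for every $\s$, the formal identity
\[
4R_{\s}=-\sum_{\theta\left[\begin{smallmatrix}\ep\\\de\end{smallmatrix}\right](\tau,0)=0}2\,v_{\ep,\de}\,Q[\ep,\de]\,\frac{\partial}{\partial X_{\s}}Q[\ep,\de],
\]
which displays each Grushevsky cubic $R_{\s}$ as a linear combination of products of a vanishing quadric and a linear form; this is precisely the assertion that the $R_{\s}$ lie in the space spanned by the linear forms and the above vanishing quadratic forms. Evaluating this identity at $X_{\s}=\T[\s](\tau,z)$, where every vanishing quadric becomes $0$ by~\eqref{eq:vT}, recovers the identities $R_{\s}=0$ of Proposition~\ref{statement:FG} as a consistency check.

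I expect the only genuine obstacle to lie in the first step: one must verify that the particular biquadratic Riemann relation can be chosen so that its non-vanishing block is exactly $A=M(0)R_0$, with the $\pm1$ coefficients matching those of~\eqref{eq:RF}, for otherwise the formal derivative of the non-vanishing side need not equal $4R_{\s}$ on the nose but only some relation equivalent to it after evaluation. Once that normalization is secured, the derivative formula of Proposition~\ref{pr.vG2} does all the work and the vanishing side is automatically of the required product form.
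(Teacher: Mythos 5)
Your proposal is correct and takes essentially the same route as the paper: both split the trivial biquadratic Riemann relation underlying Frobenius' formula \eqref{eq:RF} into its non-vanishing block and its vanishing block, differentiate with respect to the $X_{\s}$, and identify the non-vanishing side with Grushevsky's cubics and the vanishing side with products of a vanishing quadric and a linear form. The only difference is one of detail, not of method: you make explicit, via the identity $A=M(0)R_0$ and Proposition~\ref{pr.vG2}, that the derivative of the non-vanishing block is exactly $4R_{\s}$ (the paper leaves this identification implicit), and your closing worry about normalization is harmless, since an overall sign or nonzero scalar on the chosen Riemann relation does not affect the ideal-membership conclusion.
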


Thus, in the hyperelliptic case we have more cubic relations in genus $g=4$; we expect this to be true for any $g\geq 4$.

One is led to suppose that Frobenius' formula might imply the vanishing of the suitable theta constants. Let us discuss this formula in low genera. In the genus 3 case let us assume $\theta\left[\begin{smallmatrix}0 \\ 0 \end{smallmatrix}\right](\tau,0)\neq 0$, and evaluate formula (\ref{eq:RF2}) at $x=(1,0,1)^{\rm t}$,
$y=(1,1,1)^{\rm t}$ and $z=0$. Because of the vanishing of the theta constants with odd characteristics we obtain
\[
\tt{0}{0}(\tau,0)^2\tt{x}{y }(\tau, 0)^2 =0,
\]
hence $\theta\left[\begin{smallmatrix}1&0&1\\ 1&1&1 \end{smallmatrix}\right](\tau, 0)=0$.

It is a well-known fact that this equation characterizes a component of the hyperelliptic locus in genus~3.

In genus 4 we can use the same method or equation (\ref{eq:RF2a}) evaluated at the ten characteristics described in (\ref{10vanishing}), i.e., $x=\ep_i$ and $y=\de_i$. Again, we assume $\theta\left[\begin{smallmatrix}0 \\ 0 \end{smallmatrix}\right](\tau,0)\neq 0$, and use formula~(\ref{eq:RF2a}); we get
\begin{gather}\label{eq:v4}
 \tt{\ep_i}{\de_i}(\tau,0) =0, \qquad i=1,2,3,4,
\end{gather}
and six more equations, namely
\begin{gather*}
\tt{0}{0}(\tau,0)^2\tt{\ep_0}{\de_0 }(\tau, 0)^2 -\sum_{k=0}^4 \tt{s_k}{e_{k+1}}(\tau,0)^2\tt{\ep_{k+5}}{\de_{k+5} }(\tau, 0)^2=0,\\
\tt{s_k}{e_{k+1}}(\tau,0)^2\tt{\ep_0}{\de_0 }(\tau, 0)^2-\tt{0}{0}(\tau,0)^2\tt{\ep_{k+5}}{\de_{k+5} }(\tau, 0)^2=0, \qquad k=0,1,2,3,4.
\end{gather*}

Thus, we are left with a system of 6 equations in the six variables
$\theta\left[\begin{smallmatrix}\ep_{k} \\ \de_{k} \end{smallmatrix}\right] (\tau, 0)^2$ for $ k=0, 5, 6, 7, 8, 9$. The matrix of the coefficients is
\begin{gather*}%\label{eq:Ma}
A= \left(\begin{matrix}
 \tt{0}{0}^2&-\tt{0}{e_{1}}^2 & -\tt{e_{1}}{e_{2}}^2 &- \tt{s_{2}}{e_{3}}^2 & -\tt{s_{3}}{e_{4}}^2&- \tt{s_{4}}{0}^2 \vspace{1mm}\\
 -\tt{0}{e_{1}}^2 & \tt{0}{0}^2 & 0 & 0& 0 & 0\vspace{1mm}\\
-\tt{e_{1}}{e_{2}}^2 &0& \tt{0}{0}^2 & 0 & 0& 0\\
\vdots&\vdots&\vdots&\ddots& &\vdots\\
\vdots&\vdots&\vdots& &\ddots&\vdots\\
- \tt{s_{4}}{0}^2& 0 & 0& 0 & 0& \tt{0}{0}^2 \\
 \end{matrix}\right),
\end{gather*}
which unfortunately has rank $5$, since the determinant is
\[
\tt{0}{0}^8\left( Q[0, 0]^2(\tau, 0)-\sum_{k=0}^4 Q[s_k, e_{k+1}]^2(\tau,0) \right)=0.
\]

Hence, a priori, the system can admit a non-trivial solution; one expects this solution to be ruled out by using Frobenius' formula in its full generality, i.e., the equation in~(\ref{eq:FR}).

As we are dealing with the genus~4 case, we actually know that the vanishing obtained in~(\ref{eq:v4}) means the point is hyperelliptic; Max Noether actually proved that the vanishing of three azygetic theta constants implies the vanishing of seven other forming an azygetic 10-tuple with them, although this $10$-tuple is not uniquely determined by the three (cf.~\cite[Section~14]{No}; in~\cite{No} the vanishing of these ten theta constants is said to imply that the period matrix is hyperelliptic via a result by Weierstrass.

A complete proof that the vanishing of four azygetic theta constants implies that the point is hyperelliptic can be also found in \cite{Igu}; we have to mention that Igusa's proof makes use of Riemann's relations of the form described in (\ref{eq:RR2}).

\section{Characterization of the Hyperelliptic locus}
Whenever $\tau$ is the period matrix of a hyperelliptic curve, a classical result (cf.~\cite{Th}) states that a~system of characteristics as in (\ref{eq:FS}) with the vanishing and non-vanishing properties~(\ref{vanishing}) and~(\ref{nonvanishing}) can be associated with~$\tau$ or with a conjugate of $\tau$ via the action of ${\rm Sp}(2g, \ZZ)$.

 It was not until 1984 that Mumford proved the converse statement in~\cite{mu}. We will now give a new proof of Mumford's result with a different approach that has the merit of involving Gunning's multisecant formula, as expressed in the following statement (cf.~\cite{Gu}):
 \begin{Theorem} Let $X$ be an irreducible principally polarized abelian variety of dimension~$g$, and let $A_0, \dots , A_{g+1}$ be distinct points of $X$. Suppose that $\forall\, z\in X$ the $g + 2 $ points ${\rm Th}_2(A_i + z) $ are linearly dependent.
Assume moreover the following general position condition: that there
exist some $k$ and $l$ such that for $y=(-A_k+A_l)/2$ the linear span of the points ${\rm Th}_2(A_i + y) $ for $ i = 0 , \dots , g+1$ has dimension precisely equal to $g+1$, and
not less. Then $X$ is the Jacobian of some curve $C$, and all the points $A_i$ are in the image of the Abel--Jacobi map.
\end{Theorem}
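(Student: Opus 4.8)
\section*{Proof proposal}

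The plan is to translate the projective hypothesis about the Kummer map ${\rm Th}_2$ into an identity among products of first order theta functions, and then to recognize that identity as a Fay-type trisecant relation whose validity forces $X$ to be a Jacobian. First I would write the dependence condition in coordinates: for every $z$ the points ${\rm Th}_2(A_i+z)$ are linearly dependent precisely when there is a nonzero vector $(c_0(z),\dots,c_{g+1}(z))$ with $\sum_{i=0}^{g+1}c_i(z)\T[\s](\tau,A_i+z)=0$ for all $\s\in\ZZ_2^g$. Setting $\ep'=\de=0$ in the addition formula and summing over the remaining characteristic $\ep$ gives the duality
\begin{gather*}
\sum_{\ep\in\ZZ_2^g}\T[\ep](\tau,u)\T[\ep](\tau,v)=\tt{0}{0}(\tau,u+v)\,\tt{0}{0}(\tau,u-v),
\end{gather*}
so multiplying the dependence relation by $\T[\s](\tau,u)$ and summing over $\s$ converts it into the functional identity
\begin{gather*}
\sum_{i=0}^{g+1}c_i(z)\,\tt{0}{0}(\tau,u+A_i+z)\,\tt{0}{0}(\tau,u-A_i-z)=0\qquad\forall\,u,\,z.
\end{gather*}
Since the $\T[\s](\tau,u)$ are linearly independent as functions of $u$, this identity is equivalent to the original hypothesis.

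Next I would determine the coefficients $c_i(z)$. The general position condition at $y=(-A_k+A_l)/2$ states that the matrix $\big(\T[\s](\tau,A_i+y)\big)_{\s,i}$ has rank exactly $g+1$, so its kernel is one dimensional; hence $(c_0(z),\dots,c_{g+1}(z))$ is unique up to a common scalar near $y$ and, by Cramer's rule on a nonvanishing $(g+1)\times(g+1)$ minor, the $c_i$ extend to holomorphic functions of $z$ after clearing a common factor. Following how the columns $\T[\s](\tau,A_i+z)$ transform under the period lattice $\ZZ^g+\tau\ZZ^g$, I would show that the $z$-automorphy of the product $\tt{0}{0}(\tau,u+A_i+z)\tt{0}{0}(\tau,u-A_i-z)$ depends on $i$ only through $A_i$, and that the $c_i(z)$ are forced, up to a common $i$-independent factor, to coincide with theta functions of shifted argument $\tt{0}{0}(\tau,z-b_i)$ for suitable $b_i\in\CC^g$. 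Substituting these back, the identity above acquires exactly the shape of the $(g+2)$-term Fay trisecant identity attached to a curve of genus $g$.

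Finally I would invoke the converse of Fay's theorem: an identity of precisely this form, valid for all $u$ and $z$ on an irreducible $X$, characterizes Jacobians and simultaneously exhibits the $A_i$ as Abel--Jacobi images of points of the reconstructed curve $C$, the $b_i$ recording the auxiliary base points. The hard part is this last step -- extracting the curve from the multisecant identity, i.e., the difficult converse to the trisecant relation (the analysis of Gunning and Welters) -- together with verifying that the coefficient functions are genuinely nonzero theta functions and do not degenerate. The irreducibility of $X$ and the general position hypothesis are exactly the inputs that exclude the degenerate alternatives: without them the multisecant $(g+1)$-planes could come from a product decomposition of $X$ or from a collapse of the coefficient vector, and no curve would be produced.
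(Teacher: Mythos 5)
A preliminary remark: the paper does not prove this statement at all~--- it is Gunning's multisecant theorem, quoted from \cite{Gu} and used as a black box in the proof of the hyperelliptic characterization that follows it. So your attempt can only be measured against Gunning's own argument, not against anything in this paper.

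Measured that way, there is a genuine gap, and it sits exactly where you yourself locate ``the hard part''. Your opening reduction is sound: multiplying the dependence relation by $\T[\s](\tau,u)$, summing over $\s$ and using the addition formula does convert the hypothesis into the identity $\sum_{i=0}^{g+1} c_i(z)\,\tt{0}{0}(\tau,u+A_i+z)\,\tt{0}{0}(\tau,u-A_i-z)=0$ for all $u$ and $z$, and the linear independence of the $\T[\s](\tau,u)$ makes this an equivalence. But the final step~--- ``invoke the converse of Fay's theorem\dots\ (the analysis of Gunning and Welters)''~--- is circular: for $(g+2)$-term relations of this kind that converse \emph{is} the theorem to be proved; it is precisely the content of \cite{Gu}. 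The only independently available input is the trisecant criterion (three collinear points of the Kummer variety, i.e., Gunning's earlier theorem as sharpened by Welters), and the genuine mathematical work in proving the present statement is the reduction of the $(g+2)$-point dependence to the trisecant situation, by degenerating the configuration of points and controlling the limits of the coefficient functions; none of that appears in your outline. A second, related flaw: your claim that the coefficients $c_i(z)$ are ``forced'' to be single translated theta functions $\tt{0}{0}(\tau,z-b_i)$ is unjustified and in general false. By Cramer's rule the $c_i(z)$ are maximal minors of the $2^g\times(g+2)$ matrix $\big(\T[\s](\tau,A_j+z)\big)$, hence homogeneous of degree $g+1$ in second order theta functions; they factor into products of first order theta functions only once one already knows that $X$ is a Jacobian~--- that factorization is Fay's identity itself. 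So the ``shape matching'' with Fay presupposes the conclusion: your first two steps are honest reformulations, but the entire content of the theorem is concentrated in the step you delegate to a citation of the result being proved.
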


We are now in a position to prove that the above conditions are satisfied when cases~(\ref{vanishing}) and (\ref{nonvanishing}) hold; indeed, we have the following:

\begin{Theorem} Let $\tau$ be a point whose vanishing and non-vanishing theta constants are those in~\eqref{vanishing} and~\eqref{nonvanishing}; then $\tau$ is the period
matrix of a hyperelliptic Jacobian.
\end{Theorem}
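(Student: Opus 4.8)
The plan is to verify the hypotheses of the multisecant formula of Gunning recalled above for the abelian variety $X_\tau$, and then to promote its conclusion (that $X_\tau$ is a Jacobian) to hyperellipticity by an elementary Abel--Jacobi argument. First I would take the $g+2$ distinct points $A_0,\dots,A_{g+1}$ to be the two-torsion points $a_j=(\tau\ep_j+\de_j)/2$ attached to the $g+2$ \emph{even} characteristics of the special fundamental system~\eqref{eq:FS}, namely $[0,0]$ together with $[s_k,e_{k+1}]$ for $k=0,\dots,g$. These are precisely the characteristics occurring in Frobenius' formula~\eqref{eq:RF}, which is why this choice is forced on us; note that $[0,0]$ yields $A_0=0$ while all other $A_i$ are nonzero, and all $g+2$ are distinct.

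For Gunning's first hypothesis I must show that the $g+2$ vectors ${\rm Th}_2(A_i+z)$ are linearly dependent for every $z\in\CC^g$, and here I would exploit exactly the relations already produced in the paper. Using the expansion $\tt{0}{0}(\tau,\zeta)^2=\sum_\mu\T[\mu](\tau,0)\T[\mu](\tau,\zeta)$ from the addition formula, together with the translation identity $\tt{\ep_j}{\de_j}(\tau,\zeta)^2=\lambda_j(\zeta)\,\tt{0}{0}(\tau,\zeta+a_j)^2$ with $\lambda_j$ a nowhere-vanishing exponential, the scalar identity $\sum_j\ep_U(j)\tt{\ep_j}{\de_j}(\tau,0)^2\tt{\ep_j}{\de_j}(\tau,\zeta)^2=0$ coming from~\eqref{eq:RF} becomes the orthogonality of $\sum_j c_j(\zeta)\,{\rm Th}_2(A_j+\zeta)$ against the fixed vector ${\rm Th}_2(0)$. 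To upgrade this contracted relation to the genuine vector identity $\sum_j c_j(\zeta)\,{\rm Th}_2(A_j+\zeta)=0$, I would rerun the computation on the characteristic-twisted squares $\tt{\ep_j+x}{\de_j+y}(\tau,\zeta)^2$: as $[x,y]$ ranges over $\ZZ_2^{2g}$ these are exactly the translated Frobenius relations~\eqref{eq:RF2}, and contracting against the resulting spanning family of fixed vectors forces every component to vanish, giving the required dependence for all $z$.

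The crux, and the step I expect to be the main obstacle, is the general position condition together with the irreducibility of $X_\tau$ demanded by Gunning. For a suitable pair $k,l$ and $y=(A_l-A_k)/2$ I must check that the span of $\{{\rm Th}_2(A_i+y)\}$ has dimension \emph{exactly} $g+1$, i.e.\ that the matrix $\big(\T[\sigma](\tau,y+a_i)\big)$ has rank $g+1$ and not less. The dependence from the previous step already bounds the rank by $g+1$; the content is the reverse inequality, and this is precisely where the \emph{non-vanishing} conditions~\eqref{nonvanishing} are indispensable, as opposed to~\eqref{vanishing} alone. Concretely I would exhibit a $(g+1)\times(g+1)$ minor whose non-degeneracy is governed by theta constants $\tt{\ep}{\de}(\tau,0)$ with $\#(T\circ U)=g+1$, which are nonzero by~\eqref{nonvanishing}, so that a drop in rank would force one of these forbidden constants to vanish. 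Irreducibility I would settle separately: a product decomposition of $X_\tau$ would factor the theta function and create far more vanishing even theta constants than~\eqref{vanishing} and~\eqref{nonvanishing} permit, contradicting~\eqref{nonvanishing}.

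With both hypotheses in hand, Gunning's theorem yields $X_\tau=\mathrm{Jac}(C)$ for some curve $C$, with every $A_i$ in the image of the Abel--Jacobi map. It then remains to deduce hyperellipticity. Normalizing so that $A_0=0$ is the image of a base point $q_0$, each nonzero $A_i=[q_i-q_0]$ is two-torsion, so $2q_i\sim 2q_0$; since $q_i\neq q_0$, the class $|2q_0|$ contains the two distinct effective divisors $2q_0$ and $2q_i$, whence $\dim|2q_0|\ge 1$. A degree-two pencil is a $g^1_2$, so $C$ is hyperelliptic and $\tau$ is the period matrix of a hyperelliptic Jacobian, as claimed.
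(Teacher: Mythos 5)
Your architecture is the paper's own --- the same $g+2$ two--torsion points, Gunning's multisecant theorem, and the same Abel--Jacobi ending --- but the step where you verify Gunning's first hypothesis contains a fatal error: the vector identity you set out to prove, $\sum_j c_j(\zeta)\,{\rm Th}_2(A_j+\zeta)=0$ with $c_j(\zeta)=\ep_U(j)\tt{\ep_j}{\de_j}(\tau,0)^2\lambda_j(\zeta)$ nowhere vanishing, is \emph{false}. To see this, pair it with ${\rm Th}_2(b)$ for arbitrary $b$: by the addition formula $\sum_\mu \T[\mu](\tau,b)\T[\mu](\tau,w)=\tt{0}{0}(\tau,b+w)\tt{0}{0}(\tau,b-w)$, and because the exponential factors cancel exactly, your identity is equivalent to
\[
\sum_{j}\ep_U(j)\,\tt{\ep_j}{\de_j}(\tau,0)^2\,\tt{\ep_j}{\de_j}(\tau,u)\,\tt{\ep_j}{\de_j}(\tau,v)=0 \qquad \text{for all } u,v\in\CC^g \text{ independently},
\]
with $u=\zeta+b$, $v=\zeta-b$. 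Frobenius' formula \eqref{eq:FR} with $z_1=z_2=0$ forces $z_3+z_4=0$, so it yields this only on the locus $v=-u$, where it reduces to \eqref{eq:RF}. The two--variable identity cannot hold: translating $u$ by $\mu\in\ZZ^g$ and by $\tau\mu$ multiplies the $j$-th summand by $(-1)^{\langle\ep_j,\mu\rangle}$, respectively by a common factor times $(-1)^{\langle\de_j,\mu\rangle}$, so the $g+2$ summands transform by pairwise distinct characters, and vanishing for all $u,v$ would force every coefficient $\tt{\ep_j}{\de_j}(\tau,0)^2$ to vanish, contradicting \eqref{nonvanishing}. The same conclusion follows from uniqueness: wherever the span of the ${\rm Th}_2(A_i+z)$ is exactly $(g+1)$-dimensional (as the general position condition demands), the dependence is unique up to scale, and it is Grushevsky's, with coefficients $Q[\ep_j,\de_j](\tau,z)=\tt{\ep_j}{\de_j}(\tau,0)\tt{\ep_j}{\de_j}(\tau,2z)$, which vanish along $z$-divisors depending on $j$; no rescaling turns these into your nowhere-vanishing $c_j$.

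This also explains why your ``spanning family'' mechanism cannot be repaired as stated. First, the relations \eqref{eq:RF2} for varying $[x,y]$ do not contract one fixed vector: squaring the characteristic-shift formula introduces the $j$-dependent sign $(-1)^{\langle\ep_j,y\rangle}$, so each choice of $y$ contracts a \emph{different} combination $w^{(y)}(\zeta)=\sum_j(-1)^{\langle\ep_j,y\rangle}c_j(\zeta){\rm Th}_2(A_j+\zeta)$. Second, for fixed $y$ all the test vectors you obtain (entries $\pm\T[\r+x](\tau,0)$, $x\in\ZZ_2^g$) annihilate the vector $w^{(y)}(\zeta)$, which is nonzero by the same character argument as above; hence they lie in a fixed hyperplane and provably never span $\CC^{2^g}$. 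The real content of this step --- and the paper's route --- is Proposition~\ref{statement:FG}: Frobenius' formula \eqref{eq:RF} arises from a biquadratic Riemann relation which is a \emph{trivial} identity in the indeterminates $X_\s$, and applying the inverse of the map $M(0)$ of Proposition~\ref{pr.vG2}, i.e., differentiating that trivial identity with respect to each $X_\s$ before evaluating at the $\T[\s](\tau,z)$, produces Grushevsky's cubics \eqref{eq:Cu1}; read componentwise via $\T[\s](\tau,z+A_i)=(-1)^{\langle\s,e_i\rangle}\T[\s+s_{i-1}](\tau,z)$ (up to a factor independent of $\s$), these are exactly the linear dependence Gunning needs, with coefficients $Q[\ep_j,\de_j](\tau,z)$ not all identically zero. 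Your remaining steps point in the paper's direction --- it checks general position by computing the Gram matrix $AA^{\rm t}$ at $y=(A_1-A_0)/2$ via the addition formula, which is where \eqref{nonvanishing} enters through $\tt{0}{0}(\tau,0)\tt{0}{e_1}(\tau,0)\neq 0$, and it settles irreducibility by counting or by the character criterion --- but those parts of your proposal are only sketches, and without a substitute for Proposition~\ref{statement:FG} the argument does not get off the ground.
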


\begin{proof}The period matrix turns out to be irreducible by counting the vanishing and non-vanishing theta constants or by checking the conditions of Theorem~4 in~\cite{SM}, i.e., the characters associated with the non-vanishing theta constants span the character group of
 $\Gamma_g[2,4]/\pm \Gamma_g[4,8]$.

Moreover, the vanishing conditions imply that Frobenius' formula (\ref{eq:RF})
\[
 Q[0, 0]^2 (\tau, z)=\sum_{k=0}^g Q[s_k, e_{k+1}]^2(\tau, z)
\]
 holds, hence Grushevsky's equations
 (\ref{eq:Cu1}) also hold with the coefficients being not all zero (see Proposition \ref{statement:FG}).

 We will prove that this implies the vectors ${\rm Th}_2(A_i + z) $ are linearly dependent.
 Here we take
 \[
A_0=0, \ A_1=(\tau s_0 + e_1)/2, \ \dots, \ A_k= (\tau s_{k-1} + e_k)/2, \ \dots, \ A_{g+1}= (\tau s_{g} + e_{g+1})/2,
\]
 with the corresponding characteristics being
\[
\begin{bmatrix} \ep \\ \de\end{bmatrix} =\begin{bmatrix} 0 \\ 0 \end{bmatrix} ,\begin{bmatrix} s_0 \\ e_{1}\end{bmatrix}, \dots\begin{bmatrix} s_k \\ e_{k+1} \end{bmatrix},\dots,\begin{bmatrix} s_g \\ e_{g+1} \end{bmatrix}.
\]

The $\s$-th coefficient of the vector ${\rm Th}_2(A_i + z)$ is
\[
\T[\s](\tau, z+(\tau s_{i-1} + e_i)/2)= (-1)^{\langle \s, e_i\rangle}\T[\s+s_{i-1}](\tau, z).
\]
Thus, the $2^g$ polynomials $R_\s$ in~(\ref{eq:Cu1}) give one non-trivial linear relation between the ${\rm Th}_2(A_i + z)$ for any~$z$.

We are yet to prove the general position condition. We set
\[
y=(A_1-A_0)/2= (1/4,0,\dots , 0)^{\rm t},
\]
and collect the points ${\rm Th}_2(A_i+y)$ in a $(g+2)\times 2^g$ matrix
\[
A=
 \begin{array}{ccccc}
 \begin{bmatrix} \T[0](A_0+y)&\dots&\dots& \T[\ep] (A_0+y)&\dots\\
\T[0](A_1+y)&\dots&\dots& \T[\ep] (A_1+y)&\dots\\
\dots&\dots&\dots&\dots&\dots\\
\T[0](A_{g+1}+y)&\dots&\dots& \T[\ep] (A_{g+1}+y)&\dots
 \end{bmatrix}.
 \end{array}
\]

 Now $A A^{\rm t}= B=(b_{ij}) $ with
\[
b_{ij}=\sum_{\ep} \T[\ep] (A_i+y)\T[\ep] (A_j+y)=\tt {0}{0}(A_i-A_j)\tt {0}{0}(A_i+A_j+2y).
\]

Up to constants we have
\[
\tt {0}{0}(A_0-A_j)=\tt{s_j}{e_{j+1}} \qquad {\rm and}\qquad \tt {0}{0}(A_0+A_j+2y)=\tt{s_j}{e_{j+1}+e_1}
\]
and for $1\leq i\leq j\leq g+1$
\[
\tt {0}{0}(A_i-A_j)=\tt{s_i+s_j}{e_{i+1}+e_{j+1}} \qquad {\rm and}\qquad \tt {0}{0}(A_i+A_j+2y)=\tt{s_i+s_j}{e_{i+1}+e_{j+1}+e_1}.
\]

Thus, the matrix $B$ turns into
\[ B=
 \begin{bmatrix} \tt {0}{0} \tt {0}{e_1} & \tt {0}{0} \tt {0}{e_1}&0&0&\dots&0\vspace{1mm}\\
\tt {0}{0} \tt {0}{e_1} & \tt {0}{0} \tt {0}{e_1}&0&0&\dots&0\\
0& 0&\tt {0}{0} \tt {0}{e_1}&0&\dots&0\\
\vdots & \vdots &\vdots &\ddots& &\vdots\\
0& 0&0&\dots& &\tt {0}{0} \tt {0}{e_1}
 \end{bmatrix}.
\]
Therefore, the matrix $B$ has rank $g+1$, hence $g+1\geq \operatorname{rk}(A)\geq \operatorname{rk}(B)=g+1$, which means the matrix $A$ has the required rank. Then $\tau$ is the period matrix of a Jacobian. As explained in~\cite{Gr}, since we are using points of order~2, we get that $2A_i- 2A_j=0$ as points of the Jacobian of the curve. By Abel's theorem this means there exists a function on the curve whose divisor is equal to $2A_i -2A_j$, i.e., the curve is hyperelliptic.
\end{proof}

\subsection*{Acknowledgements}
The authors would like to thank Bert van Geemen for drawing their attention to the result in~\cite{vG2}. They are also grateful to Sam Grushevsky for many helpful discussions and explanations. The authors are greatly indebted to an anonymous referee for the careful reading and suggestions.

\pdfbookmark[1]{References}{ref}
\LastPageEnding

\end{document}